\documentclass[a4paper,oneside,11pt]{article}%
\usepackage{makeidx}
\usepackage[english]{babel}
\usepackage{amsmath}
\usepackage{amsfonts}
\usepackage{amssymb}
\usepackage{stmaryrd}
\usepackage{graphicx}
\usepackage{mathrsfs}
\usepackage[colorlinks,linkcolor=red,anchorcolor=blue,citecolor=blue,urlcolor=blue]{hyperref}
\usepackage[symbol*,ragged]{footmisc}
%\setcounter{MaxMatrixCols}{30}
%TCIDATA{OutputFilter=latex2.dll}
%TCIDATA{Version=5.50.0.2890}
%TCIDATA{LastRevised=Wednesday, August 12, 2015 22:29:11}
%TCIDATA{<META NAME="GraphicsSave" CONTENT="32">}
%TCIDATA{<META NAME="SaveForMode" CONTENT="1">}
%TCIDATA{BibliographyScheme=Manual}
%BeginMSIPreambleData
\providecommand{\U}[1]{\protect\rule{.1in}{.1in}}
%EndMSIPreambleData
\providecommand{\U}[1]{\protect \rule{.1in}{.1in}}

\pagenumbering{arabic}
\setlength{\textwidth}{145mm}
\setlength{\textheight}{225mm}
\headsep=20pt \topmargin=-5mm \oddsidemargin=0.46cm
\evensidemargin=0.46cm \raggedbottom
\newtheorem{theorem}{Theorem}[section]

\newtheorem{lemma}[theorem]{Lemma}

\newtheorem{proposition}[theorem]{Proposition}

\newenvironment{proof}[1][Proof]{\noindent \textbf{#1.} }{\  \rule{0.5em}{0.5em}}
\numberwithin{equation}{section}

\begin{document}

\title{On the Waring--Goldbach Problem for \\ One Square and Five Cubes }
%  \author{Min Zhang\footnotemark   \vspace*{-5mm} \\
   %  \small Department of Mathematics, China University of Mining and Technology \vspace*{-5mm} \\
  %  \small  Beijing 100083, P. R. China  }

\author{Jinjiang Li\footnotemark[1]\,\,\,\, \, \& \,\,Min Zhang\footnotemark[2] \vspace*{-4mm} \\
$\textrm{\small Department of Mathematics, China University of Mining and Technology}^{*\,\dag}$
                    \vspace*{-4mm} \\
     \small  Beijing 100083, P. R. China  }

\footnotetext[2]{Corresponding author. \\
    \quad\,\, \textit{ E-mail addresses}:
     \href{mailto:jinjiang.li.math@gmail.com}{jinjiang.li.math@gmail.com} (J. Li),
      \href{mailto:min.zhang.math@gmail.com}{min.zhang.math@gmail.com} (M. Zhang).  }

\date{}
\maketitle

{\textbf{Abstract}}: Let $\mathcal{P}_r$ denote an almost--prime with at most $r$ prime factors, counted according to multiplicity. In this paper, it is proved that for every sufficiently large even integer $N$, the equation
\begin{equation*}
   N=x^2+p_1^3+p_2^3+p_3^3+p_4^3+p_5^3
\end{equation*}
is solvable with $x$ being an almost--prime $\mathcal{P}_6$ and the other variables primes. This result constitutes an improvement upon that of Cai, who
obtained the same conclusion, but with $\mathcal{P}_{36}$ in place of $\mathcal{P}_6$.

{\textbf{Keywords}}: Waring--Goldbach problem; Hardy--Littlewood Method; almost--prime; sieve method

{\textbf{MR(2010) Subject Classification}}: 11P05, 11P32, 11P55, 11N36.

\section{Introduction and main result}

It is very likely that, for each $s>1$, every sufficiently large integer can be represented as the sum of one square and $s$ positive cubes. This has
been showed when $s>6$ by Stanley \cite{Stanley-1},
$s=6$ by Stanley \cite{Stanley-2} and $s=5$ by Watson \cite{Watson}, respectively. When $s>6$ and
$s=6$, Stanley \cite{Stanley-1} and Sinnadurai \cite{Sinnadurai} obtained the expected asymptotic formula for
the number of representations. But for $s=5$, Watson's method only gives a weak
estimation for the number of the representations, and in 1986 Vaughan \cite{Vaughan-2} enhanced Watson's lower bound to the expected order of magnitude.

In view of Vaughan¡®s result, it is reasonable to conjecture that, for every sufficiently large even integer $N$, the following equation
\begin{equation*}
  N=p^2+p_1^3+p_2^3+p_3^3+p_4^3+p_5^3
\end{equation*}
is solvable, where and below the letter $p$, with or without subscript, denotes a prime number. But this conjecture is perhaps out of reach at present.
However, it is possible to replace a variable by an almost--prime. In 2014, Cai \cite{Cai} proved that, for
every sufficiently large even integer $N$, the following equation
\begin{equation*}
  N=x^2+p_1^3+p_2^3+p_3^3+p_4^3+p_5^3
\end{equation*}
is solvable with $x$ being an almost--prime $\mathcal{P}_{36}$ and the $p_j\,(j=1,2,3,4,5)$ primes.

In this paper, we shall improve the  result of Cai \cite{Cai} and establish the following theorem.

\begin{theorem}\label{Theorem}
   Let $\mathcal{R}(N)$ denote the number of
solutions of the following equation
\begin{equation}
    N=x^2+p_1^3+p_2^3+p_3^3+p_4^3+p_5^3
\end{equation}
with $x$ being an almost--prime $\mathcal{P}_6$ and the other variables primes. Then for sufficiently large even integer $N$, we have
\begin{equation*}
   \mathcal{R}(N)\gg N^{\frac{19}{18}}\log^{-6}N.
\end{equation*}
\end{theorem}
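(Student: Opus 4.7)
The plan is to combine the Hardy--Littlewood circle method with Richert's weighted linear sieve applied to the square base $x$.

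First I would restrict $x$ to a suitable dyadic range $X/2<x\le X$ with $X$ a power of $N$ balanced so that, after sieving, the circle-method main term reproduces the target bound $N^{19/18}\log^{-6}N$. Set $P=(N/5)^{1/3}$ and introduce
\begin{equation*}
   f_d(\alpha)=\sum_{\substack{X/2<m\le X\\ d\mid m}} e(\alpha m^2),\qquad g(\alpha)=\sum_{P/2<p\le P}(\log p)\,e(\alpha p^3),
\end{equation*}
so that the weighted number of representations of $N$ with $d\mid x$ equals
\begin{equation*}
   \mathcal{A}(N,d)=\int_{0}^{1} f_d(\alpha)\,g(\alpha)^{5}\,e(-\alpha N)\,d\alpha.
\end{equation*}

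The main analytic task is to establish, for squarefree $d$ with $(d,2N)=1$ up to a level $D=X^{\eta}$ with $\eta$ as large as possible, an asymptotic formula
\begin{equation*}
   \mathcal{A}(N,d)=\frac{\mathfrak{S}(N,d)\,\mathfrak{J}(N)}{d}+E(N,d),
\end{equation*}
with $\mathfrak{S}(N,d)\gg 1$ uniformly over admissible moduli, $\mathfrak{J}(N)$ of the expected order, and $\sum_{d\le D}\mu^{2}(d)|E(N,d)|\ll N^{19/18}\log^{-A}N$ for every fixed $A>0$. The major arcs are handled by Farey dissection, using Gauss-sum evaluations for $f_d$ and Siegel--Walfisz for $g$ to extract the singular series and singular integral. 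The minor arcs are controlled by combining the pointwise bound $\sup_{\mathfrak m}|g(\alpha)|\ll P^{1-\sigma+\varepsilon}$ obtained from Vaughan's identity and Weyl's inequality for cubes over primes, Hua's mean value $\int_{0}^{1}|g(\alpha)|^{8}d\alpha\ll P^{5+\varepsilon}$, and a fourth-moment estimate $\int_{0}^{1}|f_d(\alpha)|^{4}d\alpha\ll (X/d)^{2+\varepsilon}$, which follows from the identity $f_d(\alpha)=f_{1}(d^{2}\alpha)$ applied to a theta sum of length $X/d$; interpolation via H\"older gives the required saving on $\int_{\mathfrak m}|f_d g^{5}|\,d\alpha$.

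Once the level-of-distribution data is in hand, I would feed it into Richert's weighted linear sieve (dimension $\kappa=1$) on the sequence $\{\mathcal{A}(N,d)\}$, with sifting parameter $z=X^{1/u}$ chosen so that the standard weighted-sieve numerology certifies that the surviving $x$ is a $\mathcal{P}_{6}$. This yields
\begin{equation*}
   \mathcal{R}(N)\ge\sum_{d\le D}\lambda_{d}^{-}\mathcal{A}(N,d)-(\text{weight correction})\gg \frac{N^{19/18}}{\log^{6}N},
\end{equation*}
provided $\eta$ exceeds the threshold dictated by $r=6$, and upon checking that $\mathfrak{S}(N,d)$ is multiplicative in $d$ with uniformly positive local factors.

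The hard part is the minor-arc estimate carried out \emph{uniformly in} $d$: the entire gain from Cai's $\mathcal{P}_{36}$ to $\mathcal{P}_{6}$ stems from enlarging the admissible sieve level $\eta$, which demands either a sharper pointwise cubic Weyl bound for $g(\alpha)$, or a device that sums $|E(N,d)|$ over $d\le D$ inside the integral, exploiting $f_d(\alpha)=f_{1}(d^{2}\alpha)$ to recover cancellation that is unavailable pointwise in $d$. Balancing this enlarged level against the sieve hypotheses so as to reach $r=6$ while retaining $\mathfrak{S}(N,d)\gg 1$ is the technical core of the argument.
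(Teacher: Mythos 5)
Your overall architecture (circle method plus a sieve on the square variable, with a level-of-distribution estimate summed over moduli $d$) matches the paper's, but there are two concrete gaps, and both sit exactly where you yourself flag "the technical core."

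First, the minor arcs. You take all five cubes of the same size $P\asymp N^{1/3}$ and propose to close the minor-arc estimate with Hua's eighth moment $\int_0^1|g|^8\ll P^{5+\varepsilon}$, a fourth moment of $f_d$, and a pointwise Weyl bound for $g$. This does not close: with equal cubes the main term has order $N^{7/6}$, and any H\"older split of $\int_{\mathfrak m}|f_dg^5|$ through $\int|g|^8$ and $\int|f_d|^2$ lands at $N^{5/4}$ or worse before the pointwise saving, so you would need a cubic prime Weyl exponent far beyond anything known. The paper's essential device, which your proposal omits, is to take the cubes of \emph{unequal} sizes --- three of size $U_3\asymp N^{1/3}$ and two of size $U_3^*\asymp N^{5/18}$ --- precisely so that Vaughan's sharp mean value $\int_0^1|F_3(\alpha)F_3^{*2}(\alpha)|^2\,\mathrm{d}\alpha\ll N^{8/9+\varepsilon}$ (Lemma \ref{f_3-mean-zh}) becomes available; combined with $\int_0^1|f_3f_3^*|^4\ll N^{13/9}\log^8N$ this is what makes the estimates on $\mathfrak m_0,\mathfrak m_1,\mathfrak m_2$ come out at $N^{19/18}\log^{-A}N$. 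Without this restructuring of the representation itself, the minor arcs are out of reach.

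Second, the sieve step. The attainable level here is $D=N^{1/24-51\varepsilon}$, i.e.\ only about the $1/12$-th power of the size $N^{1/2}$ of the sifted variable, and Richert's weighted sieve at that level does not certify a $\mathcal{P}_6$ (it would give something in the teens). The paper instead runs the plain Iwaniec linear sieve with $z=D^{1/3}$ --- so every surviving $x$ has all prime factors $>N^{1/72-17\varepsilon}$ and hence at most $36$ of them --- and then \emph{subtracts}, via a second, "switched" mean value theorem (Proposition \ref{mean-value-theorem-2}, in which $x=\ell p$ is decomposed and a cube variable is sieved instead), the contribution $\Gamma_r$ of those $x$ with $7\leqslant r\leqslant36$ prime factors. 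The conclusion $\mathcal{P}_6$ rests on the numerical inequality $f(3)-F(3)\sum_{r=7}^{36}c_r>0$, i.e.\ $\log2>0.448639+0.113524+0.022574+27\times0.003579$. Your proposal contains no analogue of this switching argument or of the second mean value theorem, and without it the claimed passage from level $D$ to $\mathcal{P}_6$ is unsupported.
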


The proof of our result employs the Hardy--Littlewood method and Iwaniec's linear sieve method.

\section{Notation}
Throughout this paper, $N$ always denotes a sufficiently large even integer; $\mathcal{P}_r$ denote an almost--prime with at most $r$ prime factors, counted according to multiplicity; $\varepsilon$ always denotes an arbitrary small positive constant, which may not be the same at different occurrences; $\gamma$ denotes Euler's constant; $f(x)\ll g(x)$ means that $f(x)=O(g(x))$; $f(x)\asymp g(x)$ means that $f(x)\ll g(x)\ll f(x)$; the letter $p$, with or without subscript, always stands for a prime number; the constants in the $O$--term and $\ll$--symbol depend at most on $\varepsilon$; $\mathcal{P}_r$ always
denotes an almost{prime with at most $r$ prime factors, counted according to multiplicity. As usual, $\varphi(n),\,\mu(n)$ and $\tau_k(n)$  denote Euler's function, M\"{o}bius' function and the $k$--dimensional divisor function, respectively. Especially, we write $\tau(n)=\tau_2(n)$. $p^\ell\|m$ means that $p^\ell|m$ but $p^{\ell+1}\nmid m$. We denote by $a(m)$ and $b(n)$ arithmetical functions satisfying $|a(m)|\ll1$ and $|b(n)|\ll1$; $(m,n)$ denotes the
greatest common divisor of $m$ and $n$; $e(\alpha)=e^{2\pi i\alpha}$. We always denote by $\chi$ a Dirichlet character $(\bmod q)$, and by $\chi^0$ the principal Dirichlet character $(\bmod q)$. Let
\begin{equation*}
  A=10^{100},\quad Q_0=\log^{20A}N,\quad Q_1=N^{\frac{4}{9}+50\varepsilon},\quad Q_2=N^{\frac{5}{9}-50\varepsilon},\quad D=N^{\frac{1}{24}-51\varepsilon},
\end{equation*}
\begin{equation*}
  z=D^{\frac{1}{3}},\quad U_k=\frac{1}{k}N^{\frac{1}{k}},\quad U_3^*=\frac{1}{3}N^{\frac{5}{18}},\quad \mathfrak{P}=\prod_{2<p<z}p, \quad
  F_3(\alpha)=\sum_{U_3<n\leqslant 2U_3}e(n^3\alpha),
\end{equation*}
\begin{equation*}
 F_3^*(\alpha)=\sum_{U_3^*<n\leqslant 2U_3^*}e(n^3\alpha),\quad v_k(\beta)=\int_{U_k}^{2U_k}e(\beta u^k)\mathrm{d}u,
 \quad v_3^*(\beta)=\int_{U_3^*}^{2U_3^*}e(\beta u^3)\mathrm{d}u,
\end{equation*}
\begin{equation*}
  f_3(\alpha)=\sum_{U_3<p\leqslant2U_3}(\log p)e(p^3\alpha), \qquad f_3^*(\alpha)=\sum_{U_3^*<p\leqslant2U_3^*}(\log p)e(p^3\alpha),
\end{equation*}
\begin{equation*}
  G_k(\chi,a)=\sum_{n=1}^q\chi(n)e\bigg(\frac{an^k}{q}\bigg),\quad S_k^*(q,a)=G_k(\chi^0,a),\quad S_k(q,a)=\sum_{n=1}^qe\bigg(\frac{an^k}{q}\bigg),
\end{equation*}
\begin{equation*}
  \mathcal{J}(N)=\int_{-\infty}^{+\infty}v_2(\beta)v_3^3(\beta)v_3^{*2}(\beta)e(-\beta N)\mathrm{d}\beta,
   \quad \mathcal{L}=\big\{n: U_2<n\leqslant2U_2\big\},
\end{equation*}
\begin{equation*}
 f_2(\alpha,d)=\sum_{U_2<d\ell\leqslant2U_2}e\big(\alpha(d\ell)^2\big),\qquad
 h(\alpha)=\sum_{m\leqslant D^{2/3}}a(m)\sum_{n\leqslant D^{1/3}}b(n)f_2(\alpha,mn),
\end{equation*}
\begin{equation*}
 B_d(q,N)=\sum_{\substack{a=1\\(a,q)=1}}^qS_2(q,ad^2)S_3^{*5}(q,a)e\bigg(-\frac{aN}{q}\bigg),\quad B(q,N)=B_1(q,N),
\end{equation*}
\begin{equation*}
  A_d(q,N)=\frac{B_d(q,N)}{q\varphi^5(q)},\quad A(q,N)=A_1(q,N),\quad \mathfrak{S}_d(N)=\sum_{q=1}^\infty A_d(q,N),
\end{equation*}
\begin{equation*}
  \mathfrak{S}(N)=\mathfrak{S}_1(N),\quad \log\mathbf{U}=(\log2U_3)^3(\log2U_3^*)^2,
    \quad \log\mathbf{W}=(\log U_3)^2(\log U_3^*)^2,
\end{equation*}
\begin{equation*}
  \quad \mathscr{M}_r=\big\{m:U_2<m\leqslant 2U_2,m=p_1p_2\cdots p_r,\, z\leqslant p_1\leqslant p_2\leqslant\cdots\leqslant p_r\big\},
\end{equation*}
\begin{equation*}
  \quad \mathscr{N}_r=\big\{m: m=p_1p_2\cdots p_{r-1},z\leqslant p_1\leqslant p_2\leqslant\cdots\leqslant p_{r-1},\,
  p_1p_2\cdots p_{r-2}p_{r-1}^2\leqslant2U_2\big\},
\end{equation*}
\begin{equation*}
  g_r(\alpha)=\sum_{\substack{\ell\in\mathscr{N}_r\\ \ell p\in\mathcal{L}}}\frac{\log p}{\log\frac{U_2}{\ell}}e\big(\alpha(\ell p)^2\big).
\end{equation*}

\section{Preliminary Lemmas}

In order to prove Theorem we need the following lemmas.

\begin{lemma}\label{Titchmarsh-lemma-42}
Let $F(x)$ be a real differentiable function such that $F'(x)$ is monotonic, and $F'(x)\geqslant m>0$, or $F'(x)\leqslant-m<0$, throughout
 the interval $[a,b]$. Then we have
 \begin{equation*}
     \bigg|\int_a^b e^{iF(x)}\mathrm{d}x\bigg|\leqslant\frac{4}{m}.
 \end{equation*}
\end{lemma}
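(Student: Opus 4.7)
The plan is to reduce the oscillatory integral to boundary terms plus an integral against a monotone function by integration by parts, treating $1/F'$ as a function of bounded variation rather than assuming $F\in C^{2}$ (the hypothesis gives only monotonicity of $F'$, not differentiability).

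First I would record the consequences of the hypothesis: $F'$ has constant sign and is monotone on $[a,b]$, so the reciprocal $1/F'(x)$ is real, monotone, and obeys $|1/F'(x)|\leq 1/m$. In particular its total variation satisfies
\begin{equation*}
\int_a^b \left|d\!\left(\frac{1}{F'(x)}\right)\right| = \left|\frac{1}{F'(b)}-\frac{1}{F'(a)}\right|\leq \frac{2}{m}.
\end{equation*}

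Next, I would write $e^{iF(x)} = \frac{1}{iF'(x)}\cdot\frac{d}{dx}e^{iF(x)}$ and perform a Riemann--Stieltjes integration by parts, which is legitimate because $1/F'$ has bounded variation:
\begin{equation*}
\int_a^b e^{iF(x)}\,\mathrm{d}x = \left[\frac{e^{iF(x)}}{iF'(x)}\right]_a^b - \int_a^b e^{iF(x)}\,\mathrm{d}\!\left(\frac{1}{iF'(x)}\right).
\end{equation*}
The boundary term has modulus at most $|1/F'(a)|+|1/F'(b)|\leq 2/m$. For the remaining Stieltjes integral, the integrand $e^{iF(x)}$ has modulus one, and since $1/F'$ is monotone the signed measure $\mathrm{d}(1/F')$ has constant sign, so the integral is dominated by the total variation $2/m$ from the display above. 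Adding the two contributions produces the claimed bound $4/m$.

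The main subtlety I anticipate is justifying the integration by parts without a $C^2$ assumption; this is routine via the Stieltjes framework once one invokes that a monotone function is of bounded variation. A more elementary-sounding alternative would be to split $e^{iF}$ into its real and imaginary parts and apply the second mean value theorem for integrals to each piece separately, but that route only delivers a constant like $4\sqrt{2}/m$ instead of $4/m$, so the Stieltjes (equivalently, Abel summation) formulation appears to be what is needed to match the sharp numerical constant stated in the lemma.
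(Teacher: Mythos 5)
Your proof is correct. The paper itself gives no argument here --- it simply cites Lemma 4.2 of Titchmarsh's book --- so any self-contained proof is already ``different'' from what the paper does; your Riemann--Stieltjes integration by parts against $1/F'$ is a clean and valid way to supply one. The hypotheses do guarantee that $1/F'$ is monotone with $|1/F'|\leqslant 1/m$, the boundary term is at most $2/m$, and the Stieltjes remainder is at most the total variation $|1/F'(b)-1/F'(a)|\leqslant 2/m$, giving $4/m$. Two small remarks. First, since $1/F'(a)$ and $1/F'(b)$ have the same sign, the total variation is in fact at most $\max(1/|F'(a)|,1/|F'(b)|)\leqslant 1/m$, so your argument actually yields the sharper bound $3/m$. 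Second, your closing comment undersells the second-mean-value-theorem route: Titchmarsh's own proof applies it (in the one-sided form, with the positive monotone weight $1/F'$) to the real and imaginary parts separately, obtaining $2/m$ for each and hence $2\sqrt{2}/m\leqslant 4/m$ for the full integral, not $4\sqrt{2}/m$; the two approaches are really the same integration-by-parts idea in different packaging, and both meet the stated constant.
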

\begin{proof}
   See Lemma 4.2 of Titchmarsh \cite{Titchmarsh}.
\end{proof}

\begin{lemma}\label{Titchmarsh-lemma-48}
 Let $f(x)$ be a real differentiable function in the interval $[a,b]$. If $f'(x)$ is monotonic and satisfies $|f'(x)|\leqslant\theta<1$. Then we have
\begin{equation*}
   \sum_{a<n\leqslant b}e^{2\pi if(n)}=\int_a^be^{2\pi if(x)}\mathrm{d}x+O(1).
\end{equation*}
\end{lemma}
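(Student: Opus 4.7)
The plan is to reduce the assertion to controlling an oscillatory integral that can be handled by Lemma \ref{Titchmarsh-lemma-42}. I would begin with the standard Abel-summation identity
\[
\sum_{a<n\le b}e^{2\pi i f(n)}=\int_a^b e^{2\pi i f(x)}\,dx+2\pi i\int_a^b\psi(x)\,f'(x)\,e^{2\pi i f(x)}\,dx+O(1),
\]
where $\psi(x)=x-\lfloor x\rfloor-\tfrac12$ is the sawtooth function and the $O(1)$ absorbs the contribution of the endpoints. It then remains to prove that
\[
I:=\int_a^b\psi(x)\,f'(x)\,e^{2\pi i f(x)}\,dx=O(1).
\]

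Next, I would exploit the monotonicity of $f'$ via Stieltjes integration by parts, taking $u=f'(x)$ and $dv=\psi(x)e^{2\pi i f(x)}\,dx$. Setting $H(c):=\int_a^c\psi(x)e^{2\pi i f(x)}\,dx$, this produces
\[
I=f'(b)\,H(b)-\int_a^b H(x)\,df'(x).
\]
Since $|f'|\le\theta$ and the total variation of $f'$ equals $|f'(b)-f'(a)|\le 2\theta$ (by its monotonicity), this gives $|I|\ll\theta\sup_{c\in[a,b]}|H(c)|$, so the problem reduces to proving the uniform bound $\sup_c|H(c)|=O(1)$.

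To bound $H$, I would expand the sawtooth in its Fourier series $\psi(x)=-\sum_{k\ge 1}\sin(2\pi kx)/(\pi k)$ and split each sine into two complex exponentials. This rewrites $H(c)$ as a series over $k\ge 1$ of expressions of the form $(2\pi ik)^{-1}\int_a^c e^{2\pi i(f(x)\pm kx)}\,dx$. The phase derivatives $2\pi(f'(x)\pm k)$ of the inner integrals are monotonic (since $f'$ is) and, because $|f'|\le\theta<1\le k$, satisfy $|2\pi(f'(x)\pm k)|\ge 2\pi(k-\theta)>0$ on $[a,c]$. Lemma \ref{Titchmarsh-lemma-42} therefore bounds each such integral by $2/(\pi(k-\theta))$, whence
\[
\sup_{c\in[a,b]}|H(c)|\ll\sum_{k=1}^\infty\frac{1}{k(k-\theta)}\ll\frac{1}{1-\theta},
\]
which is $O(1)$ with implied constant depending only on $\theta$.

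The decisive technical point is the Stieltjes integration by parts in the second step. A direct Fourier attack on $I$ itself, followed by Lemma \ref{Titchmarsh-lemma-42} applied term by term, would deliver only the divergent series $\sum 1/(k-\theta)$. Extracting the monotonic factor $f'$ first, however, transfers an additional $1/k$ decay from the Fourier coefficients of $\psi$ into the final summation, which is exactly what is needed to force absolute convergence and so to obtain the asserted $O(1)$ bound.
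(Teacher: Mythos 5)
The paper offers no proof of this lemma at all --- it simply cites Lemma 4.8 of Titchmarsh --- so the only meaningful comparison is with the classical argument there. Your proof is correct and is a genuine rearrangement of that argument. Titchmarsh keeps the factor $f'$ inside the oscillatory integral and, for each Fourier mode $k$ of the sawtooth, writes $f'=\frac{f'}{f'\pm k}\cdot(f'\pm k)$, so that the second factor integrates exactly to a bounded antiderivative and the monotone first factor, of size $O\!\left(\frac{\theta}{k-\theta}\right)$, is handled by the second mean value theorem; the extra $1/k$ needed for convergence comes from that quotient. You instead strip $f'$ off once and for all by a Stieltjes integration by parts against the monotone, bounded function $f'$, reducing everything to the single uniform bound $\sup_c|H(c)|=O(1)$, and then recover the extra $1/k$ from the Fourier coefficients of $\psi$ together with the first-derivative test (Lemma \ref{Titchmarsh-lemma-42}) applied to the bare exponentials $e^{2\pi i(f(x)\pm kx)}$. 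Both routes use the same two ingredients (monotonicity of $f'$ and the strict bound $\theta<1$), but yours isolates them more cleanly; your closing remark correctly identifies why the naive termwise attack on $I$ fails. The one step you should not leave implicit is the interchange of the integral with the Fourier series of $\psi$, which converges pointwise but not uniformly: this is legitimate because the partial sums $S_K(x)=-\sum_{k\le K}\frac{\sin(2\pi kx)}{\pi k}$ are uniformly bounded and converge to $\psi(x)$ off the integers, so bounded convergence applies (equivalently, one may use the truncated expansion $\psi(x)=S_K(x)+O\big(\min(1,(K\|x\|)^{-1})\big)$ and let $K\to\infty$). With that line added, the proof is complete, with all implied constants depending only on $\theta$, as they must.
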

\begin{proof}
   See Lemma 4.8 of Titchmarsh \cite{Titchmarsh}.
\end{proof}

\begin{lemma}\label{Hua-fuhe}
  For $(a,p)=1$, we have
\begin{equation*}
  S_{k}^*(p^\ell,a)=0, \qquad \textrm{for}\quad \ell\geqslant\gamma(p),
\end{equation*}
where
\begin{equation*}
  \gamma(p)=\left\{
                   \begin{array}{ll}
                        \theta+2, & \textrm{if}\,\, p^\theta\|k,\, p\not=2\,\,\textrm{or}\,\, p=2,\,\theta=0,   \\
                        \theta+3, & \textrm{if}\,\, p^{\theta}\|k,\,p=2,\,\,\theta>0.
                   \end{array}
  \right.
\end{equation*}
\end{lemma}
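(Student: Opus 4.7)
The plan is to establish the vanishing by an averaging argument over a one--parameter family of multiplicative substitutions on the coprime residues modulo $p^\ell$. Write $k=p^\theta k_1$ with $(k_1,p)=1$ and set $M=p^{\ell-\theta-1}$, which is a positive integer as soon as $\ell\ge\gamma(p)$. Since $1+Mu$ is a unit modulo $p^\ell$ for every integer $u$, the map $n\mapsto n(1+Mu)$ is a bijection on $(\mathbb{Z}/p^\ell)^\times$; consequently
\[
S_k^*(p^\ell,a)=\sum_{\substack{n=1\\(n,p)=1}}^{p^\ell}e\!\left(\frac{an^k(1+Mu)^k}{p^\ell}\right)
\]
for every choice of $u$.

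The heart of the proof is the binomial expansion
\[
(1+Mu)^k=1+kMu+\sum_{j\ge 2}\binom{k}{j}M^ju^j.
\]
The linear term obeys $kM=k_1p^{\ell-1}$ and so contributes the phase $e(ak_1n^ku/p)$. For the tail, I would combine the identity $\binom{k}{j}=(k/j)\binom{k-1}{j-1}$ with the trivial lower bound $v_p\!\bigl(\binom{k}{j}\bigr)\ge 0$ to obtain
\[
v_p\!\left(\binom{k}{j}M^j\right)\ge\max\bigl(0,\,\theta-v_p(j)\bigr)+j(\ell-\theta-1),
\]
and verify term by term that this lower bound exceeds $\ell$ whenever $\ell\ge\gamma(p)$, so that every $j\ge 2$ term vanishes modulo $p^\ell$.

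Granted this reduction, $(1+Mu)^k\equiv 1+k_1p^{\ell-1}u\pmod{p^\ell}$; averaging the identity above over $u=0,1,\dots,p-1$ and swapping the order of summation then gives
\[
S_k^*(p^\ell,a)=\frac{1}{p}\sum_{\substack{n=1\\(n,p)=1}}^{p^\ell}e\!\left(\frac{an^k}{p^\ell}\right)\sum_{u=0}^{p-1}e\!\left(\frac{ak_1n^ku}{p}\right).
\]
Since $(ak_1n^k,p)=1$, the inner geometric sum in $u$ vanishes identically, whence $S_k^*(p^\ell,a)=0$ as claimed.

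The main point where care is required is the term--by--term valuation bookkeeping above, because the binding constraint on $\ell$ comes from $j=2$ and exhibits an asymmetry between $p$ odd and $p=2$. For $p$ odd one has $v_p\!\bigl(\binom{k}{2}\bigr)=\theta$, so the $j=2$ term is killed modulo $p^\ell$ exactly when $\ell\ge\theta+2$, while for $p=2$ with $\theta\ge 1$ one has instead $v_2\!\bigl(\binom{k}{2}\bigr)=v_2(k(k-1)/2)=\theta-1$ (because $k-1$ is odd), forcing the sharper requirement $\ell\ge\theta+3$. This is precisely the origin of the two different values of $\gamma(p)$ recorded in the statement, and it is the only delicate part of the argument.
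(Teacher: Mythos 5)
Your proposal is a correct and complete proof; the paper itself offers no argument here, merely citing Lemma 8.3 of Hua's \emph{Additive Theory of Prime Numbers}, and your multiplicative substitution $n\mapsto n(1+p^{\ell-\theta-1}u)$ followed by averaging over $u$ is essentially the classical argument behind that lemma. The valuation bookkeeping checks out: writing $m=\ell-\theta-1\ge 1$, the inequality $\max(0,\theta-v_p(j))+jm\ge\ell$ holds for every $j\ge 2$ under the stated hypotheses (using $v_p\bigl(\binom{k}{j}\bigr)\ge\theta-v_p(j)$ from $j\binom{k}{j}=k\binom{k-1}{j-1}$), and the binding case $j=2$ reproduces exactly the two values of $\gamma(p)$. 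One small wording correction: at $j=2$ with $\ell=\gamma(p)$ the valuation bound equals $\ell$ rather than exceeding it, which is still exactly what is needed for the term to vanish modulo $p^\ell$.
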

\begin{proof}
  See Lemma 8.3 of Hua \cite{Hua-book}.
\end{proof}

\begin{lemma}\label{f_3-mean-zh}
We have
\begin{eqnarray*}
        \emph{(i)} \quad  \int_0^1 |F_3(\alpha)F_3^{*2}(\alpha)|^2\mathrm{d}\alpha\ll N^{\frac{8}{9}+\varepsilon},
  &  &  \emph{(ii)} \quad \int_0^1 |F_3(\alpha)F_3^*(\alpha)|^4\mathrm{d}\alpha\ll N^{\frac{13}{9}},
                   \nonumber \\
        \emph{(iii)}\quad \int_0^1 |f_3(\alpha)f_3^{*2}(\alpha)|^2\mathrm{d}\alpha\ll N^{\frac{8}{9}+\varepsilon},
  &  &  \emph{(iv)} \quad \int_0^1 |f_3(\alpha)f_3^*(\alpha)|^4\mathrm{d}\alpha\ll N^{\frac{13}{9}}\log^8N.
\end{eqnarray*}
\end{lemma}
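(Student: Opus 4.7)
I will reduce (iii), (iv) to (i), (ii) by absorbing prime weights into powers of $\log N$, and prove (i), (ii) by interpreting each integral via Parseval's identity as a count of Diophantine solutions, splitting off the diagonal and bounding the off-diagonal using Hua's lemma together with the divisor bound for representations as a sum of two cubes.

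Since every prime weight $\log p$ appearing in $f_3, f_3^*$ satisfies $\log p \le \log(2U_3) \ll \log N$, summand-wise majorisation yields
\begin{equation*}
   \int_0^1 |f_3 f_3^{*2}|^2\,d\alpha \ll (\log N)^6 \int_0^1 |F_3 F_3^{*2}|^2\,d\alpha, \qquad \int_0^1 |f_3 f_3^*|^4\,d\alpha \ll (\log N)^8 \int_0^1 |F_3 F_3^*|^4\,d\alpha,
\end{equation*}
so (iii), (iv) follow from (i), (ii) respectively (absorbing $(\log N)^6$ into $N^\varepsilon$ for (iii), keeping $(\log N)^8$ explicit for (iv)).

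For (i), Parseval represents the integral as the count of sextuples $(x_1, x_2, y_1, y_2, y_3, y_4)$ with $x_1^3 + y_1^3 + y_2^3 = x_2^3 + y_3^3 + y_4^3$, $x_i \in (U_3, 2U_3]$, $y_j \in (U_3^*, 2U_3^*]$. The diagonal $x_1 = x_2$ reduces to $y_1^3 + y_2^3 = y_3^3 + y_4^3$, counted $\int_0^1 |F_3^*|^4\,d\alpha \ll (U_3^*)^{2+\varepsilon}$ by Hua's lemma, so it contributes $\ll U_3 \cdot (U_3^*)^{2+\varepsilon} \ll N^{8/9+\varepsilon}$. In the off-diagonal, $v := x_1^3 - x_2^3 \ne 0$ equals $y_3^3 + y_4^3 - y_1^3 - y_2^3$, hence $|v| \ll (U_3^*)^3 \ll N^{5/6}$, forcing $|x_1 - x_2| \ll N^{1/6}$ and restricting the admissible pairs to $\ll N^{1/2}$ in number. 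Writing this contribution as $\sum_{v \ne 0} r_F(v) A(v)$ where $r_F(v) = \#\{(x_1, x_2) : x_1^3 - x_2^3 = v\}$ and $A(v)$ is the Fourier coefficient of $|F_3^*|^4$, the divisor bound gives $r_F(v) \ll N^\varepsilon$, and combining this with the pointwise estimate $|A(v)| \le \int|F_3^*|^4\,d\alpha \ll (U_3^*)^{2+\varepsilon}$ and a Cauchy--Schwarz using Hua's $\int|F_3^*|^8\,d\alpha \ll (U_3^*)^{5+\varepsilon}$ bounds the off-diagonal by $\ll N^{8/9+\varepsilon}$ as well. An entirely analogous argument for (ii) produces the diagonal contribution $\ll (U_3 U_3^*)^2 \ll N^{11/9}$ and handles the off-diagonal via Hua's $\int|F_3|^8 \ll U_3^{5+\varepsilon}$, $\int|F_3^*|^8 \ll (U_3^*)^{5+\varepsilon}$, the restriction $|v| \ll N^{5/6}$ on the common value $v = x_1^3 + x_2^3 - x_3^3 - x_4^3 = y_3^3 + y_4^3 - y_1^3 - y_2^3$, and the divisor bound, delivering the claimed $\ll N^{13/9}$.

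\textbf{Main obstacle.} The most delicate step is the off-diagonal estimate in (i). The straightforward Cauchy--Schwarz pairing $\int|F_3|^4$ with $\int|F_3^*|^8$ yields only $\ll N^{37/36+\varepsilon}$, and even after restricting the shift to $|v| \ll N^{5/6}$ a naive argument produces at best $\ll N^{17/18+\varepsilon}$. Closing the residual $N^{1/18}$ gap to reach $\ll N^{8/9+\varepsilon}$ requires the careful interplay between the pointwise Fourier-coefficient bound $|A(v)| \ll (U_3^*)^{2+\varepsilon}$ (rather than the $L^2$-bound from Hua's eighth moment), the sparsity of the support of $r_F$ inside the narrow strip $|v| \ll N^{5/6}$ (of cardinality $\ll N^{1/2}$), and Hua's lemma, arranged so that the off-diagonal contribution precisely matches the diagonal in order of magnitude.
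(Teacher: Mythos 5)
Your reduction of (iii) and (iv) to (i) and (ii) by majorising the weights $\log p\ll\log N$ and comparing counts of solutions of the underlying Diophantine equations is correct and is exactly what the paper does. For (i) and (ii) themselves, however, the paper offers no proof: it cites the Theorem of Vaughan \cite{Vaughan-1} and Lemma 2.4 of Cai \cite{Cai}, and these are genuinely deep inputs that your sketch does not recover. The flaw is precisely at the off-diagonal step you flag as the ``main obstacle,'' and the three ingredients you propose to combine there cannot close the gap. Indeed, writing the off-diagonal contribution to (i) as $\sum_{v\in V}r_F(v)A(v)$ with $r_F(v)\ll N^{\varepsilon}$, your available information about $A$ is: $|V|\ll N^{1/2}$, the pointwise bound $A(v)\ll N^{5/9+\varepsilon}$, and the $L^2$ bound $\sum_v A(v)^2\ll N^{25/18+\varepsilon}$. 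The constant configuration $A(v)=N^{4/9}$ on all of $V$ satisfies every one of these constraints and yields $\sum_{v\in V}A(v)=N^{17/18}$; hence no argument using only these facts can prove a bound better than $N^{17/18+\varepsilon}$, which exceeds the target $N^{8/9+\varepsilon}$ by $N^{1/18}$. Vaughan's actual proof is not an ``arrangement'' of these estimates: after the substitution $x_1-x_2=h$ it converts the off-diagonal count into a mean value involving the quadratic exponential sums $\sum_z e\bigl(\alpha h(3z^2+3hz+h^2)\bigr)$ and estimates these by the circle method, which is an additional idea your proposal is missing.

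The claim that (ii) follows by an ``entirely analogous argument'' is also unsupported, and in fact the analogy breaks down. The divisor-bound device requires a two-variable difference $x_1^3-x_2^3=(x_1-x_2)(x_1^2+x_1x_2+x_2^2)$; for (ii) the relevant quantity is the four-variable expression $v=x_1^3+x_2^3-x_3^3-x_4^3$, whose representation count is not $\ll N^{\varepsilon}$, so there is no diagonal/off-diagonal dichotomy of the same shape. Moreover Cauchy--Schwarz through the eighth moments gives only $\bigl(\int_0^1|F_3|^8\bigr)^{1/2}\bigl(\int_0^1|F_3^*|^8\bigr)^{1/2}\ll N^{55/36+\varepsilon}$, which is larger than the target $N^{13/9}=N^{52/36}$; and that target carries no $\varepsilon$, which Hua-type inputs cannot deliver. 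Both (i) and (ii) should either be cited, as the paper does, or proved by reproducing the substantive arguments of Vaughan and Cai; as written, your proof establishes only the (correct) deduction of (iii) and (iv) from (i) and (ii).
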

\begin{proof}
  For (i), one can see the Theorem of Vaughan \cite{Vaughan-1}, and for (ii), one can see Lemma 2.4 of Cai \cite{Cai}. Moreover, (iii) and (iv) follow from (i) and (ii) by considering the number of solutions of the underlying Diophantine equations, respectively.
\end{proof}

\begin{lemma}\label{W(a)-Delta3}
   For $\alpha=\frac{a}{q}+\beta$, define
\begin{equation}\label{N(q,a)-def}
   \mathfrak{N}(q,a)=\bigg(\frac{a}{q}-\frac{1}{qQ_0},\frac{a}{q}+\frac{1}{qQ_0}\bigg],
\end{equation}
\begin{equation}\label{Delta_3-def}
   \Delta_3(\alpha)=f_3(\alpha)-\frac{S_3^*(q,a)}{\varphi(q)}\sum_{U_3<n\leqslant 2U_3}e(\beta n^3),
\end{equation}
\begin{equation}\label{W(alpha)-def}
   W(\alpha)=\sum_{d\leqslant D}\frac{c(d)}{dq}S_2(q,ad^2)v_2(\beta),
\end{equation}
where
\begin{equation*}
 c(d)=\sum_{\substack{d=mn\\ m\leqslant D^{2/3}\\ n\leqslant D^{1/3}}}a(m)b(n)\ll\tau(d).
\end{equation*}
Then we have
\begin{equation}\label{W(a)-Delta-mean-square}
   \sum_{1\leqslant q\leqslant Q_0}\sum_{\substack{a=-q\\(a,q)=1}}^{2q}\int_{\mathfrak{N}(q,a)}\big|W(\alpha)\Delta_3(\alpha)\big|^2\mathrm{d}\alpha
   \ll N^{\frac{2}{3}}\log^{-100A}N.
\end{equation}
\end{lemma}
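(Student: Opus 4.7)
The plan is to exploit the factorisation $W(\alpha)=T(q,a)\,v_2(\beta)$, where
$$T(q,a)=\sum_{d\leqslant D}\frac{c(d)S_2(q,ad^2)}{dq}$$
depends only on the pair $(q,a)$ while $v_2(\beta)$ depends only on $\beta$. The contribution of each arc to the left-hand side of \eqref{W(a)-Delta-mean-square} then factors as
$$|T(q,a)|^2\int_{|\beta|\leqslant 1/(qQ_0)}|v_2(\beta)|^2\,|\Delta_3(a/q+\beta)|^2\,\mathrm{d}\beta,$$
so the task splits into controlling the arithmetic factor $T(q,a)$, the oscillatory factor $v_2(\beta)$, and the error $\Delta_3$ separately.

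For $T(q,a)$ I would use the Gauss-sum estimate $|S_2(q,ad^2)|\leqslant q^{1/2}(q,d^2)^{1/2}$ together with $|c(d)|\ll\tau(d)$ and the small size of $D=N^{1/24-51\varepsilon}$. Splitting the $d$-sum by $\gcd(d,q)$ and applying routine divisor-function bounds produces $|T(q,a)|\ll\log^{O(1)}N$ uniformly in $(a,q)=1$, whence
$$\sum_{q\leqslant Q_0}\sum_{\substack{a=-q\\(a,q)=1}}^{2q}|T(q,a)|^2\ll\log^{O(1)}N.$$
For the oscillatory factor, Lemma~\ref{Titchmarsh-lemma-42} gives $|v_2(\beta)|\ll\min(U_2,|\beta|^{-1/2})$, so that
$$\int_{|\beta|\leqslant 1/(qQ_0)}|v_2(\beta)|^2\,\mathrm{d}\beta\ll\log N.$$

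The heart of the argument is a uniform Siegel--Walfisz-type bound for $\Delta_3$: for $q\leqslant Q_0$ one aims to show
$$|\Delta_3(\alpha)|\ll U_3\exp(-c\sqrt{\log N})\qquad(\alpha\in\mathfrak{N}(q,a)).$$
The derivation decomposes $f_3(\alpha)$ into residue classes modulo $q$, applies Siegel--Walfisz to each progression sum (with error $\ll u\exp(-c\sqrt{\log u})$), performs Abel summation against the oscillatory kernel $e(\beta u^3)$, and uses stationary-phase control of $v_3(\beta)$ via Lemma~\ref{Titchmarsh-lemma-42} to absorb the factor $1+|\beta|U_3^3$ introduced by integration by parts. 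Combining the three bounds, the total contribution to the left-hand side of \eqref{W(a)-Delta-mean-square} is $\ll\exp(-2c\sqrt{\log N})\cdot U_3^2\cdot\log^{O(1)}N$, which is $\ll N^{2/3}\log^{-100A}N$ because $\exp(-c\sqrt{\log N})$ outpaces any fixed power of $\log N$.

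The principal obstacle is the Siegel--Walfisz step itself. Because $\beta$ can extend all the way to $1/(qQ_0)$, the quantity $|\beta|U_3^3$ may be as large as $N/Q_0$, well outside the regime where the naive ``narrow'' form of Siegel--Walfisz directly delivers an exponential saving. The technical core of the proof is to marry the arithmetic input (prime distribution in progressions, with its classical zero-free region) with the analytic input (cancellation in $e(\beta u^3)$ from stationary phase) so as to keep the exponential saving uniform across the full width of $\mathfrak{N}(q,a)$; the remaining Gauss-sum and oscillatory-integral manipulations are essentially routine.
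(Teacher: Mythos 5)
The paper gives no proof of this lemma beyond citing Lemma 2.5 of Cai, so your argument has to be judged on its own terms. Your outer framework (factor $W(\alpha)=T(q,a)v_2(\beta)$, Gauss-sum and divisor bounds for $T(q,a)$, Siegel--Walfisz input for $\Delta_3$) is reasonable, but the step you yourself call the heart of the argument --- a \emph{uniform pointwise} bound $|\Delta_3(\alpha)|\ll U_3\exp(-c\sqrt{\log N})$ on all of $\mathfrak{N}(q,a)$ --- is a genuine gap that cannot be closed in the form you state. The arc has radius $1/(qQ_0)$ with $Q_0=\log^{20A}N$, so $|\beta|N$ ranges up to $N\log^{-40A}N$; splitting into residue classes, applying Siegel--Walfisz and summing by parts only yields $\Delta_3(\alpha)\ll(1+|\beta|N)\,U_3\exp(-c\sqrt{\log N})$, which is worse than the trivial bound $\Delta_3(\alpha)\ll U_3$ over almost the entire arc. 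No unconditional pointwise exponential saving is known in that range: this is the classical obstruction from the exceptional-set literature for Goldbach-type problems, where the major-arc approximation to a prime exponential sum is controlled pointwise only for $|\beta|\ll\exp(c'\log^{1/2}N)/N$ without GRH. Your closing paragraph correctly names this difficulty, but the proposed remedy (``marry'' stationary phase with the zero-free region to keep the exponential saving uniform across the whole arc) is a wish, not a proof.

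The gap is manufactured by your own decoupling $\int|v_2(\beta)|^2|\Delta_3(\alpha)|^2\,\mathrm{d}\beta\leqslant\sup_\beta|\Delta_3(\alpha)|^2\cdot\int|v_2(\beta)|^2\,\mathrm{d}\beta$, which discards the decay of $v_2$ exactly where it is needed. Keep the two factors together and split the arc at $|\beta|=\log^{C}N/N$ for a suitable $C\asymp A$. On the inner piece, $|\beta|N\leqslant\log^{C}N$, so the partial-summation bound above does give $\Delta_3(\alpha)\ll U_3\log^{C}N\exp(-c\sqrt{\log N})\ll U_3\exp(-\tfrac{c}{2}\sqrt{\log N})$, and $\int|v_2|^2\,\mathrm{d}\beta\ll\log N$ finishes that range. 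On the outer piece the \emph{trivial} bound $\Delta_3(\alpha)\ll U_3$ suffices, because Lemma \ref{Titchmarsh-lemma-42} gives $|v_2(\beta)|\ll\min\big(U_2,1/(|\beta|U_2)\big)$ (note: the first-derivative test yields $1/(|\beta|U_2)$, not $|\beta|^{-1/2}$), whence $\int_{|\beta|>\log^{C}N/N}|v_2(\beta)|^2\,\mathrm{d}\beta\ll U_2^{-2}\cdot N\log^{-C}N\ll\log^{-C}N$. Also beware that $\sum_{q\leqslant Q_0}\sum_{a}|T(q,a)|^2$ is $\ll Q_0\log^{O(1)}N=\log^{20A+O(1)}N$, not $\log^{O(1)}N$ with an absolute exponent: the factor $Q_0$ from the number of arcs survives, so one needs $C\geqslant 120A+O(1)$ to land at $U_3^2\log^{-100A}N\asymp N^{2/3}\log^{-100A}N$. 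With this splitting your argument delivers (\ref{W(a)-Delta-mean-square}); without it, the estimate at the edge of the arc is off by a factor of order $(|\beta|N)^2\asymp N^2\log^{-80A}N$.
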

\begin{proof}
 See Lemma 2.5 of Cai \cite{Cai}.
\end{proof}

\begin{lemma}
  For $\alpha=\frac{a}{q}+\beta$, define
   \begin{equation}\label{V_k-def}
      V_k(\alpha)=\frac{S_k^*(q,a)}{\varphi(q)}v_k(\beta).
   \end{equation}
 Then we have
 \begin{equation}\label{V-3-mean-square}
   \sum_{1\leqslant q\leqslant Q_0}\sum_{\substack{a=-q\\(a,q)=1}}^{2q}\int_{\mathfrak{N}(q,a)}\big|V_3(\alpha)\big|^2\mathrm{d}\alpha
   \ll N^{-\frac{1}{3}}\log^{21A}N
\end{equation}
and
\begin{equation}\label{W(a)-mean-square}
   \sum_{1\leqslant q\leqslant Q_0}\sum_{\substack{a=-q\\(a,q)=1}}^{2q}\int_{\mathfrak{N}(q,a)}\big|W(\alpha)\big|^2\mathrm{d}\alpha
   \ll \log^{21A}N,
\end{equation}
where $\mathfrak{N}(q,a)$ and $W(\alpha)$are defined by (\ref{N(q,a)-def}) and  (\ref{W(alpha)-def}), respectively.
\end{lemma}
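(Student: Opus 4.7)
The plan is to exploit the factored structure of $V_3$ and $W$ on each arc $\mathfrak{N}(q,a)$: in both cases the integrand is a product of $|v_k(\beta)|^2$ (depending only on $\beta = \alpha - a/q$) and a Gauss-sum factor depending only on $(q,a)$, so the arc-integral decouples into (squared Gauss-sum modulus)$\,\times \int_{|\beta|\le 1/(qQ_0)} |v_k(\beta)|^2\,d\beta$. A common preliminary is the Parseval bound $\int_{\mathbb{R}} |v_k(\beta)|^2\,d\beta \ll N^{2/k-1}$, obtained by substituting $v = u^k$ in the definition of $v_k$ and applying Plancherel to the Fourier transform of $v^{1/k-1}\mathbf{1}_{[U_k^k,(2U_k)^k]}$. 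This yields $\int|v_3|^2\ll N^{-1/3}$ and $\int|v_2|^2\ll 1$, and on every arc the $\beta$-integral is estimated by the whole-line integral.

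For (\ref{V-3-mean-square}), I would combine the above with the orthogonality identity
\[
\sum_{a\,\mathrm{mod}\, q} |S_3^*(q,a)|^2 = q\,\varphi(q)\,\rho_3(q), \qquad \rho_3(q) := \#\{t\in(\mathbb{Z}/q\mathbb{Z})^{\times}: t^3\equiv 1\pmod{q}\},
\]
a direct consequence of orthogonality of additive characters modulo $q$. Since $\rho_3(q)\le 3^{\omega(q)}$ and $q/\varphi(q)\ll \log\log q$, the LHS of (\ref{V-3-mean-square}) is
\[
\ll N^{-1/3}\sum_{q\le Q_0}\frac{q\,\rho_3(q)}{\varphi(q)} \ll N^{-1/3}\sum_{q\le Q_0} 3^{\omega(q)}\log\log q \ll N^{-1/3} Q_0(\log Q_0)^{O(1)},
\]
and since $Q_0 = \log^{20A}N$, this is $\ll N^{-1/3}\log^{21A}N$.

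For (\ref{W(a)-mean-square}), on $\mathfrak{N}(q,a)$ I write $W(\alpha) = v_2(\beta)\,T(q,a)$ with $T(q,a) := \sum_{d\le D} c(d)S_2(q,ad^2)/(dq)$, and use the classical bound $|S_2(q,ad^2)|\le\sqrt{q(d^2,q)}$ (valid because $(a,q)=1$ forces $(ad^2,q)=(d^2,q)$) to obtain $|T(q,a)| \ll q^{-1/2}\sum_{d\le D}\tau(d)\sqrt{(d^2,q)}/d$. To control the last sum uniformly in $q$, I decompose $d = d_1 d_2$ with $d_1\mid q^\infty$ and $(d_2,q)=1$, so that $(d^2,q) = (d_1^2,q)$ depends only on $d_1$. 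The $d_2$-sum is $\ll\log^2 D$, while the sum over $d_1\mid q^\infty$ factors multiplicatively as $\prod_{p\mid q}F_p$, where
\[
F_p := \sum_{k\ge 0}(k+1)\,p^{\min(k,a_p/2)-k} \le C(a_p+1)^2 \qquad (a_p := v_p(q))
\]
by splitting the range at $k = a_p/2$. The multiplicative function $g(q) := \prod_{p\mid q}F_p^2$ has polynomially-bounded local factors $g(p^k) \ll (k+1)^4$, so a Shiu-type estimate yields $\sum_{q\le Q_0}g(q) \ll Q_0(\log Q_0)^{O(1)}$. Combining with $\int|v_2|^2\ll 1$ and $\varphi(q)/q\le 1$ from the $a$-sum gives
\[
\sum_{q\le Q_0}\sum_{(a,q)=1}\int_{\mathfrak{N}(q,a)}|W(\alpha)|^2\,d\alpha \ll \log^4 D\cdot Q_0(\log Q_0)^{O(1)} \ll \log^{21A}N.
\]

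The main obstacle is the divisor analysis in (\ref{W(a)-mean-square}): the naive bound $\sqrt{(d^2,q)}\le\sqrt{q}$ loses a factor of $\sqrt{q}$ that cannot be recovered from the $\varphi(q)/q$-gain when summing over $a$. The multiplicative decomposition of $d$ into its $q$-part $d_1$ and coprime part $d_2$ is essential, since it isolates the arithmetic cost at each prime $p\mid q$ to a local factor polynomial in $a_p$, which is then absorbed by the $\log^A N$ slack between $Q_0$ and $\log^{21A}N$. The estimate (\ref{V-3-mean-square}) is by comparison routine, amounting to one use of Plancherel together with a standard multiplicative-function sum.
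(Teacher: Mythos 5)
Your argument is correct and complete. Note that the paper itself offers no proof of this lemma --- it simply cites Lemma 2.6 of Cai --- so there is nothing internal to compare against; your derivation (decoupling each arc into a Gauss-sum factor times $\int_{\mathbb{R}}|v_k(\beta)|^2\,\mathrm{d}\beta\ll N^{2/k-1}$ via Plancherel, orthogonality giving $\sum_{a}|S_3^*(q,a)|^2=q\varphi(q)\rho_3(q)$ for the first bound, and $|S_2(q,ad^2)|\ll\sqrt{q(d^2,q)}$ with the $d=d_1d_2$ splitting for the second) is the standard route and the numerology closes comfortably against $Q_0=\log^{20A}N$ versus the allowed $\log^{21A}N$. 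The only cosmetic remarks: the "Shiu-type estimate" is more machinery than needed (a Rankin/Euler-product bound $\sum_{q\le x}g(q)\le x\prod_{p\le x}(1+\sum_{k\ge1}g(p^k)p^{-k})\ll x(\log x)^{O(1)}$ suffices since $g(p)=1+O(p^{-1/2})$), and one should remember the $a$-range $-q<a\le 2q$ contributes $3\varphi(q)$ terms rather than $\varphi(q)$, which of course changes nothing.
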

\begin{proof}
   See Lemma 2.6 of Cai \cite{Cai}.
\end{proof}

For $(a,q)=1,\,0\leqslant a\leqslant q\leqslant Q_2$, set
\begin{equation*}
  \mathfrak{M}(q,a)=\bigg(\frac{a}{q}-\frac{1}{qQ_2},\frac{a}{q}+\frac{1}{qQ_2}\bigg],\qquad\quad
  \mathfrak{M}=\bigcup_{1\leqslant q\leqslant Q_0^5}\bigcup_{\substack{1\leqslant a\leqslant q\\(a,q)=1}}\mathfrak{M}(q,a),
\end{equation*}
\begin{equation*}
  \mathfrak{M}_0(q,a)=\bigg(\frac{a}{q}-\frac{Q_0}{N},\frac{a}{q}+\frac{Q_0}{N}\bigg],\qquad\quad
  \mathfrak{M}_0=\bigcup_{1\leqslant q\leqslant Q_0^5}\bigcup_{\substack{1\leqslant a\leqslant q\\(a,q)=1}}\mathfrak{M}_0(q,a),
\end{equation*}
\begin{equation*}
  \mathfrak{I}_0=\bigg(-\frac{1}{Q_2},1-\frac{1}{Q_2}\bigg],\quad\quad \qquad \mathfrak{m}_0=\mathfrak{M}\setminus \mathfrak{M}_0,
\end{equation*}
\begin{equation*}
  \mathfrak{m}_1=\bigcup_{Q_0^5<q\leqslant Q_1}\bigcup_{\substack{1\leqslant a\leqslant q\\(a,q)=1}}\mathfrak{M}(q,a),\quad\quad\qquad
  \mathfrak{m}_2=\mathfrak{I}_0\setminus (\mathfrak{M}\cup \mathfrak{m}_1).
\end{equation*}
Then we get the Farey dissection
\begin{equation}\label{Farey-dissection}
  \mathfrak{I}_0=\mathfrak{M}_0\cup\mathfrak{m}_0\cup\mathfrak{m}_1\cup\mathfrak{m}_2.
\end{equation}

\begin{lemma}\label{f_k-tihuan}
 For $\alpha=\frac{a}{q}+\beta$, define
\begin{equation*}
 W_k(\alpha)=\frac{S_k^*(q,a)}{\varphi(q)}v_3^*(\beta).
\end{equation*}
Then $\alpha=\frac{a}{q}+\beta\in\mathfrak{M}_0$, we have
   \begin{equation}\label{f_3=V_3+O}
      f_3(\alpha)=V_3(\alpha)+O\big(U_3\exp(-\log^{1/3}N)\big),
   \end{equation}
   \begin{equation}\label{f_3^*=W_3+O}
      f_3^*(\alpha)=W_3(\alpha)+O\big(U_3^*\exp(-\log^{1/3}N)\big),
   \end{equation}
   \begin{equation}\label{g_r=c_rV_2+O}
     g_r(\alpha)=\frac{c_rV_2(\alpha)}{\log U_2}+O\big(U_2\exp(-\log^{1/3}N)\big),
   \end{equation}
where $V_k(\alpha)$ is defined (\ref{V_k-def}), and
\begin{align}\label{c_r-def}
   c_r = & (1+O(\varepsilon)) \nonumber  \\
   & \times \int_{r-1}^{35}\frac{\mathrm{d}t_1}{t_1}\int_{r-2}^{t_1-1}\frac{\mathrm{d}t_2}{t_2}\cdots
         \int_{3}^{t_{r-4}-1}\frac{\mathrm{d}t_{r-3}}{t_{r-3}}\int_{2}^{t_{r-3}-1}\frac{\log (t_{r-2}-1)}{t_{r-2}}\mathrm{d}t_{r-2}.
\end{align}
\end{lemma}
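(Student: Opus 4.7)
The plan is to establish the three approximations by a uniform strategy: on $\mathfrak{M}_0$ the modulus satisfies $q \le Q_0^5 = \log^{100A} N$, well inside the Siegel--Walfisz range, while $|\beta| \le Q_0/N$ so the phase $e(\beta \cdot)$ oscillates slowly. The Siegel--Walfisz theorem will supply a power saving of the form $x\exp(-c\sqrt{\log x})$, and since $\sqrt{\log N} > \log^{1/3} N$ for large $N$ this comfortably beats the claimed $\exp(-\log^{1/3} N)$.

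For (\ref{f_3=V_3+O}) and (\ref{f_3^*=W_3+O}): Since each prime in the sum exceeds $q$, I would split by residue classes modulo $q$,
\begin{equation*}
 f_3(\alpha)=\sum_{\substack{m=1\\(m,q)=1}}^q e\!\left(\frac{am^3}{q}\right) \sum_{\substack{U_3<p\le 2U_3\\ p\equiv m\,(\mathrm{mod}\,q)}}(\log p)\,e(\beta p^3).
\end{equation*}
Applying Abel summation to the inner sum together with the Siegel--Walfisz estimate $\sum_{p\le x,\,p\equiv m\,(q)} \log p = x/\varphi(q) + O(x\exp(-c\sqrt{\log x}))$ replaces it by $\varphi(q)^{-1}v_3(\beta)$ plus an admissible error, and summing over $m$ reconstitutes the Gauss sum $S_3^*(q,a)$. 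The argument for $f_3^*$ is identical with $U_3$, $v_3$ replaced by $U_3^*$, $v_3^*$.

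For (\ref{g_r=c_rV_2+O}): The crucial observation is that every prime factor of $\ell \in \mathscr{N}_r$ is at least $z \asymp N^{1/72 - O(\varepsilon)}$, whereas $q \le Q_0^5$ is polylogarithmic, so $(\ell,q)=1$ automatically. For fixed $\ell$ I would split the $p$-sum into residue classes modulo $q$, apply Siegel--Walfisz, and change variables $v = \ell u$ in the resulting integral to obtain
\begin{equation*}
 \sum_{\substack{U_2/\ell<p\le 2U_2/\ell\\ p\equiv m\,(q)}}(\log p)\,e\big(\beta(\ell p)^2\big)=\frac{v_2(\beta)}{\ell\,\varphi(q)}+O\!\left(\frac{U_2}{\ell}\exp(-c\sqrt{\log N})\right).
\end{equation*}
Furthermore, since $(\ell,q)=1$ the substitution $n = \ell m$ is a bijection of reduced residues modulo $q$, whence $\sum_{(m,q)=1} e(a\ell^2 m^2/q) = S_2^*(q,a)$. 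Assembling these ingredients gives
\begin{equation*}
 g_r(\alpha)=\frac{S_2^*(q,a)}{\varphi(q)}\,v_2(\beta)\sum_{\ell\in\mathscr{N}_r}\frac{1}{\ell\log(U_2/\ell)}+O\!\left(U_2\exp(-\log^{1/3}N)\right),
\end{equation*}
reducing the task to evaluating the arithmetic sum $\sum_{\ell\in\mathscr{N}_r} 1/(\ell\log(U_2/\ell))$.

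To identify this sum I would parametrise $p_j = U_2^{1/t_j}$: the monotonicity $z \le p_1 \le \cdots \le p_{r-1}$ becomes $36 - O(\varepsilon) \ge t_1 \ge \cdots \ge t_{r-1} \ge 2$ and the inequality $p_1\cdots p_{r-2}p_{r-1}^2 \le 2U_2$ becomes $\sum_{j\le r-2}1/t_j + 2/t_{r-1} \le 1$, while $\log(U_2/\ell)/\log U_2 = 1 - \sum_j 1/t_j$. Since $\sum_{p\le X}1/p = \log\log X + M + o(1)$, the natural measure on each prime variable is $dp/(p\log p) = -dt/t$, so iterated Abel summation converts each prime sum into a Mertens integral $\int dt/t$ over the appropriate nested range. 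Performing the sum over $p_{r-1}$ first, the weight $1/\log(U_2/\ell)$ combines with the $p_{r-1}$-sum to give the logarithmic kernel $\log(t_{r-2}-1)/t_{r-2}$, and the remaining $r-3$ sums produce the nested integrals $\int_{r-j}^{t_{j-1}-1}dt_j/t_j$ of (\ref{c_r-def}), with the factor $1+O(\varepsilon)$ absorbing both the lower-order Mertens corrections and the small gap between $35$ and $36$ at the outer endpoint.

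The Siegel--Walfisz steps in the first two approximations are entirely standard. The main obstacle is the bookkeeping of the Mertens iteration: checking that the ordering constraints $p_1\le\cdots\le p_{r-1}$ translate into the lower limits $r-j$ (which guarantee enough room for the $r-1-j$ primes still to be summed) and, above all, verifying that the combined innermost evaluation produces precisely the kernel $\log(t_{r-2}-1)/t_{r-2}$ appearing in (\ref{c_r-def}), rather than some neighbouring logarithmic form arising from a different change of variable.
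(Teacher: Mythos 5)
Your proposal follows essentially the same route as the paper: split into reduced residue classes modulo $q$ (permissible since $q\leqslant Q_0^5$ is polylogarithmic while the primes and the factors of $\ell$ are powers of $N$), apply Siegel--Walfisz with partial summation to replace the inner prime sums by $\varphi(q)^{-1}$ times the relevant $v_k(\beta)$-integral, and reconstitute the complete Gauss sum $S_k^*(q,a)$, with the error comfortably dominated by $\exp(-\log^{1/3}N)$. Your additional Mertens-iteration discussion of why $\sum_{\ell\in\mathscr{N}_r}1/(\ell\log(U_2/\ell))=c_r/\log U_2$ is in fact more detailed than the paper, which simply asserts this identity in its final display.
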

\begin{proof}
  By Siegel--Walfisz theorem and partial summation, we obtain
\begin{eqnarray}
  g_r(\alpha) & = & \sum_{\substack{\ell\in\mathscr{N}_r \\ \ell p\in\mathcal{L} }}
        e\bigg(\Big(\frac{a}{q}+\beta\Big)(\ell p)^2\bigg)\frac{\log p}{\log\frac{U_2}{\ell}}
              \nonumber \\
    & = & \sum_{\substack{h=1\\(h,q)=1}}^q e\bigg(\frac{ah^2}{q}\bigg)\sum_{\ell\in\mathscr{N}_r}\frac{1}{\log\frac{U_2}{\ell}}
          \sum_{\substack{\frac{U_2}{\ell}<p\leqslant\frac{2U_2}{\ell} \\ \ell p\equiv h \!\!\!\!\! \pmod q }} (\log p)e\big(\beta(\ell p)^2\big)
             \nonumber \\
    & = & \sum_{\substack{h=1\\(h,q)=1}}^q e\bigg(\frac{ah^2}{q}\bigg)\sum_{\ell\in\mathscr{N}_r}\frac{1}{\log\frac{U_2}{\ell}}
          \int_{\frac{U_2}{\ell}}^{\frac{2U_2}{\ell}} e\big(\beta(\ell u)^2\big) \mathrm{d}
          \Bigg(\sum_{\substack{\frac{U_2}{\ell}<p\leqslant u \\ p\equiv h\bar{\ell^{-1}}\!\!\!\!\! \pmod q }} \log p\Bigg)
               \nonumber \\
    & = & \frac{S_2^*(q,a)}{\varphi(q)}v_2(\beta)\sum_{\ell\in\mathscr{N}_r}\frac{1}{\ell\log\frac{U_2}{\ell}}
           +O\big(U_2\exp(-\log^{1/3}N)\big)
               \nonumber \\
    & = & \frac{c_rV_2(\alpha)}{\log U_2}+O\big(U_2\exp(-\log^{1/3}N)\big).
\end{eqnarray}
This completes the proof of (\ref{g_r=c_rV_2+O}). Also, (\ref{f_3=V_3+O}) and (\ref{f_3^*=W_3+O}) can be proved in similar but simpler processes.
\end{proof}

\begin{lemma}\label{h(a)-upper-m2}
For $\alpha\in\mathfrak{m}_2$, we have
  \begin{equation*}
     h(\alpha)\ll N^{\frac{5}{18}-24\varepsilon}.
  \end{equation*}
\end{lemma}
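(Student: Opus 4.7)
The goal is to beat the trivial bound $h(\alpha)\ll U_2\asymp N^{1/2}$ by a factor $N^{2/9+24\varepsilon}$ on the minor arc $\mathfrak{m}_2$. The strategy is to exploit the Type--II bilinear structure of $h(\alpha)=\sum_{m\leqslant D^{2/3}}a(m)\sum_{n\leqslant D^{1/3}}b(n)f_2(\alpha,mn)$ via Cauchy--Schwarz, and then extract cancellation using a quadratic Weyl-type bound together with the rational approximation that membership in $\mathfrak{m}_2$ forces on $\alpha$.

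First, I would observe that Dirichlet's theorem gives some $(a,q)=1$ with $1\leqslant q\leqslant Q_2$ and $|\alpha-a/q|\leqslant 1/(qQ_2)$; because $\alpha$ lies outside $\mathfrak{M}\cup\mathfrak{m}_1$, we must have $Q_1<q\leqslant Q_2$. This is the only information about $\alpha$ that will be used. Next, apply Cauchy--Schwarz in the outer $m$--variable to obtain
\begin{equation*}
|h(\alpha)|^2\ll D^{2/3}\sum_{m\leqslant D^{2/3}}\Bigl|\sum_{n\leqslant D^{1/3}}b(n)f_2(\alpha,mn)\Bigr|^2.
\end{equation*}
Substituting $k=n\ell$ inside $f_2(\alpha,mn)=\sum_{U_2<mn\ell\leqslant 2U_2}e(\alpha m^2n^2\ell^2)$ and grouping by $k$ folds the inner double sum into $\sum_{U_2/m<k\leqslant 2U_2/m}B_m(k)\,e(\alpha m^2k^2)$, with coefficients $|B_m(k)|\ll\tau(k)$. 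Squaring out and passing to the difference variable $u=k_1-k_2$ and sum variable $v=k_1+k_2$ linearizes the quadratic phase into $\alpha m^2uv$; the inner $v$--sum is then a geometric series whose modulus is dominated by $\min\bigl(U_2/m,\|\alpha m^2u\|^{-1}\bigr)$.

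I would then separate the diagonal $u=0$ from the off--diagonal. The diagonal contributes $\ll D^{2/3}U_2\log^cN\ll N^{19/36+\varepsilon}$, comfortably below the required bound $|h(\alpha)|^2\ll N^{5/9-48\varepsilon}$. For the off--diagonal $u\neq 0$, invoke the standard estimate
\begin{equation*}
\sum_{|u|\leqslant U}\min\bigl(X,\|\theta u\|^{-1}\bigr)\ll\Bigl(\frac{U}{r}+1\Bigr)\bigl(X+r\log r\bigr),
\end{equation*}
applied with $\theta=\alpha m^2$, whose Dirichlet approximation has denominator $r=q/(q,m^2)$. Because $m\leqslant D^{2/3}=N^{1/36-34\varepsilon}$ is minuscule compared with $Q_1=N^{4/9+50\varepsilon}$, for generic $m$ one has $r$ essentially of order $q>Q_1$, supplying the needed saving; summing over $m$ then delivers the target $N^{5/18-24\varepsilon}$.

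The hard part is controlling the off--diagonal sum uniformly in $m$: when $(q,m^2)$ is large, the effective denominator $r$ shrinks and the Weyl saving deteriorates. One handles this by dyadic decomposition in $g=(q,m^2)$, using that the number of $m\leqslant D^{2/3}$ with $g\mid m^2$ is roughly $D^{2/3}/\sqrt{g}$, so the bad contributions from large $g$ are suppressed by the smallness of $D$. The calibration $D=N^{1/24-51\varepsilon}$ is precisely tuned so that both $D^{2/3}U_2$ (diagonal) and the dyadic off--diagonal leading terms stay below $N^{5/9-48\varepsilon}$; verifying that the $\varepsilon$--losses propagate correctly through the dyadic analysis is the technical heart of the argument.
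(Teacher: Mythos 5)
Your opening step (Dirichlet's theorem forcing $Q_1<q\leqslant Q_2$ for every $\alpha\in\mathfrak{m}_2$) is exactly the input the paper uses, but the paper then simply quotes estimate (4.5) of Lemma 4.2 of Br\"udern--Kawada, which gives $h(\alpha)\ll N^{1/2}\tau^2(q)\log^2N\,(q+N|q\alpha-a|)^{-1/2}+N^{1/4+\varepsilon}D^{2/3}$, and checks numerically that both terms are $\ll N^{5/18-24\varepsilon}$. Your attempt to reprove such a bound from scratch by Cauchy--Schwarz and Weyl differencing has a genuine gap at the off-diagonal step: after you fold $k=n\ell$, the coefficients $B_m(k)\ll\tau(k)$ are not constant, so after the substitution $u=k_1-k_2$, $v=k_1+k_2$ the inner $v$-sum still carries the factor $B_m\bigl(\tfrac{v+u}{2}\bigr)\overline{B_m\bigl(\tfrac{v-u}{2}\bigr)}$ and is \emph{not} a geometric series; the bound $\min\bigl(U_2/m,\|\alpha m^2u\|^{-1}\bigr)$ is not available. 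Weyl differencing linearizes the phase only when the shifted summand is coefficient-free, and stripping divisor-type coefficients would require a further Cauchy--Schwarz that you cannot afford.

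More seriously, even if one pretends the coefficients are constant, the method falls quantitatively short at the bottom of the minor-arc range. The off-diagonal main term from the standard $\min$-lemma is $D^{2/3}\sum_{m\leqslant D^{2/3}}(U_2/m)^2/r_m$ with $r_m=q/(q,m^2)\leqslant q$; the worst case is $(q,m^2)=1$ and $q\asymp Q_1$, where this is already $\asymp D^{2/3}N/Q_1=N^{21/36-O(\varepsilon)}$, exceeding the required $|h(\alpha)|^2\ll N^{5/9-48\varepsilon}=N^{20/36-48\varepsilon}$ by essentially the factor $D^{2/3}$ lost in your initial Cauchy--Schwarz. No dyadic analysis in $g=(q,m^2)$ repairs this, since the overshoot occurs at $g=1$. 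The quoted Br\"udern--Kawada estimate carries no $D$-dependence in its leading term precisely because it does not proceed bilinearly: $D=N^{1/24-51\varepsilon}$ is small enough that $h(\alpha)=\sum_{d\leqslant D}c(d)f_2(\alpha,d)$ can be treated termwise, replacing each quadratic Weyl sum $f_2(\alpha,d)$ by its Gauss-sum approximation with modulus $q_d=q/(q,d^2)$ and summing $\tau(d)(q,d^2)^{1/2}/d$ over $d\leqslant D$, which only costs $\tau^2(q)\log^2N$ (compare the paper's own estimate (\ref{W(a)-upper}) for $W(\alpha)$). A self-contained proof should follow that route rather than the bilinear one.
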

\begin{proof}
  By the estimate (4.5) of Lemma 4.2 in Br\"{u}dern and Kawada \cite{Brudern-Kawada}, we deduce that
\begin{align*}
   h(\alpha) \ll & \quad \frac{N^{\frac{1}{2}}\tau^2(q)\log^2N}{(q+N|q\alpha-a|)^{1/2}}+N^{\frac{1}{4}+\varepsilon}D^{\frac{2}{3}}  \\
   \ll & \quad N^{\frac{1}{2}+\varepsilon}Q_1^{-\frac{1}{2}}+N^{\frac{1}{4}+\varepsilon}D^{\frac{2}{3}}\ll N^{\frac{5}{18}-24\varepsilon}.
\end{align*}
This completes the proof of Lemma \ref{h(a)-upper-m2}.
\end{proof}

\section{Mean Value Theorems}

In this section, we shall prove the mean value theorems for the proof of  Theorem \ref{Theorem}.

\begin{proposition}\label{mean-value-theorem-1}
Let
\begin{equation*}
  J(N,d)=\sum_{\substack{m^2+p_1^3+p_2^3+p_3^3+p_4^3+p_5^3=N\\ m\in\mathcal{L},\quad m\equiv 0 \!\!\!\pmod d \\
  U_3<p_1,\,p_2,\,p_3\leqslant 2U_3 \\ U_3^*<p_4,\,p_5\leqslant 2U_3^*}}
  \prod_{j=1}^5\log p_j.
\end{equation*}
Then we have
\begin{equation*}
  \sum_{m\leqslant D^{2/3}}a(m)\sum_{n\leqslant D^{1/3}}b(n)\bigg(J(N,mn)-\frac{\mathfrak{S}_{mn}(N)}{mn}\mathcal{J}(N)\bigg)\ll N^{\frac{19}{18}}\log^{-A}N.
\end{equation*}
\end{proposition}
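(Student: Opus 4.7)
The plan is to apply the Hardy--Littlewood circle method to the bilinear expression on the left-hand side. Since $f_2(\alpha,d)$ generates the squares $m\in\mathcal{L}$ with $d\mid m$, the sum
\begin{equation*}
\mathcal{I}(N):=\sum_{m\le D^{2/3}}a(m)\sum_{n\le D^{1/3}}b(n)J(N,mn)
\end{equation*}
admits the integral representation
\begin{equation*}
\mathcal{I}(N)=\int_{\mathfrak{I}_0}h(\alpha)f_3^3(\alpha)f_3^{*\,2}(\alpha)e(-\alpha N)\,\mathrm{d}\alpha,
\end{equation*}
so the task is to evaluate this integral through the Farey dissection (\ref{Farey-dissection}) and compare with the predicted main term.

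On the principal arcs $\mathfrak{M}_0$ I would replace $f_3$ by $V_3$, $f_3^*$ by $W_3$ through Lemma \ref{f_k-tihuan}, and $h$ by its major-arc model $W$ via the mean-square bound (\ref{W(a)-Delta-mean-square}); the crude estimates $|V_3|\ll N^{1/3}$ and $|W_3|\ll N^{5/18}$ absorb the resulting cross terms. The remaining integral factorises after Fourier analysis into a truncated singular series times a truncated $\mathcal{J}(N)$; extending both to their full ranges costs at most $O(N^{19/18}\log^{-A}N)$ by standard tail bounds on $A_d(q,N)$ (derived from Lemma \ref{Hua-fuhe}) and on $v_k(\beta)$ (derived from Lemma \ref{Titchmarsh-lemma-42}). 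This produces the predicted main term
\begin{equation*}
\sum_{m\le D^{2/3}}a(m)\sum_{n\le D^{1/3}}b(n)\frac{\mathfrak{S}_{mn}(N)}{mn}\mathcal{J}(N).
\end{equation*}

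For the minor arcs I would pursue an $L^\infty$-versus-$L^2$ strategy. On $\mathfrak{m}_2$, the pointwise bound of Lemma \ref{h(a)-upper-m2} combines with Cauchy--Schwarz, Lemma \ref{f_3-mean-zh}(iii), and the standard fourth-moment bound $\int_0^1|f_3|^4\,\mathrm{d}\alpha\ll N^{2/3+\varepsilon}$ to produce a saving of a fixed positive power of $N$. On $\mathfrak{m}_1$, where $q>Q_0^5$, standard minor-arc pointwise estimates for prime cubes yield $f_3(\alpha),f_3^*(\alpha)\ll U_3^{1-\delta}$ for some absolute $\delta>0$; pairing this with Cauchy--Schwarz against the global $L^2$ bound $\int_0^1|h(\alpha)|^2\,\mathrm{d}\alpha\ll N^{1/2}\log^c N$ and Lemma \ref{f_3-mean-zh}(iv) delivers the required logarithmic saving. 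On $\mathfrak{m}_0\subset\mathfrak{M}\setminus\mathfrak{M}_0$ the denominators remain small, so I would decompose $f_3=V_3+\Delta_3$ via (\ref{Delta_3-def}): the $V_3^3$-piece is controlled by (\ref{V-3-mean-square}) and contributes a small remainder to the main term, while each cross term carries at least one $\Delta_3$-factor and is bounded by Cauchy--Schwarz using the crucial estimate (\ref{W(a)-Delta-mean-square}) together with the sup bounds on $V_3$, $W_3$, and $f_3^*$.

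The principal obstacle is the logarithmic accounting on $\mathfrak{m}_0$: each invocation of (\ref{W(a)-Delta-mean-square}) delivers the saving $\log^{-100A}N$, but the companion integrals from (\ref{V-3-mean-square}) cost up to $\log^{21A}N$; the parameters $A=10^{100}$ and $Q_0=\log^{20A}N$ are calibrated precisely so that after Cauchy--Schwarz every cross term retains a net $\log^{-A}N$ saving against the main term of size $N^{19/18}$. Summing the contributions from all four regions of (\ref{Farey-dissection}) then yields Proposition \ref{mean-value-theorem-1}.
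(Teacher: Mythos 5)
Your global setup --- the integral representation of the bilinear sum as $\int_{\mathfrak{I}_0}h(\alpha)f_3^3(\alpha)f_3^{*2}(\alpha)e(-N\alpha)\,\mathrm{d}\alpha$, the Farey dissection (\ref{Farey-dissection}), the major--arc evaluation on $\mathfrak{M}_0$, and the treatment of $\mathfrak{m}_2$ by playing $\sup_{\mathfrak{m}_2}|h(\alpha)|$ against $\int_0^1|f_3^3f_3^{*2}|\,\mathrm{d}\alpha$ --- coincides with the paper's. The genuine gap is your treatment of $\mathfrak{m}_1$. There $q$ ranges over $(Q_0^5,Q_1]$ with $Q_0^5=\log^{100A}N$, so $\mathfrak{m}_1$ contains points $\alpha=a/q+\beta$ whose denominator is only a power of $\log N$ and whose $\beta$ is tiny; at such points $f_3(\alpha)$ is genuinely of size $|S_3^*(q,a)|\varphi(q)^{-1}U_3\asymp U_3q^{-1/2+o(1)}$, which exceeds $U_3^{1-\delta}$ for every absolute $\delta>0$. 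Hence the pointwise bound $f_3(\alpha),f_3^*(\alpha)\ll U_3^{1-\delta}$ you invoke is false on precisely the part of $\mathfrak{m}_1$ that matters, and your $L^\infty$--versus--$L^2$ strategy there has no starting point. The arrangement also fails numerically even under the most optimistic assumption: granting square--root cancellation $\sup_{\mathfrak{m}_1}|f_3|\ll U_3Q_1^{-1/2}\ll N^{1/9}$ throughout, your pairing with $\big(\int_0^1|h|^2\big)^{1/2}\ll N^{1/4+\varepsilon}$ and $\big(\int_0^1|f_3f_3^*|^4\big)^{1/2}\ll N^{13/18}\log^4N$ yields $N^{1/9+1/4+13/18}=N^{13/12}$, which is larger than the target $N^{19/18}$.

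The paper extracts the $\mathfrak{m}_1$ saving from the \emph{square} generating function rather than from the cubes. It first replaces $h$ by its model $W$ of (\ref{W(alpha)-def}) via Theorem 4.1 of Vaughan \cite{Vaughan-book} with an acceptable error $O(DQ_1^{1/2+\varepsilon})$; the key point is the bound (\ref{W(a)-upper}), $|W(\alpha)|\ll\tau_3(q)U_2q^{-1/2}\log^2N(1+|\beta|N)^{-1}$, which for $q>Q_0^5$ gives $\sup_{\mathfrak{m}_1}|W(\alpha)|\ll N^{1/2}\log^{-30A}N$ --- a large power of $\log N$ saved exactly because the sieve variable, not the prime--cube variable, sees the condition $q>Q_0^5$. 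It then covers $\mathfrak{m}_1$ by the arcs $\mathfrak{N}(q,a)$ with $q\leqslant Q_0$ via Dirichlet's theorem, splits off the region $\mathfrak{N}_1$ where $|\beta|$ is large (handled by $W(\alpha)\ll N^{1/5}\log^2N$), and on the remainder writes $f_3=V_3+\Delta_3+O(1)$ as in (\ref{Delta_3-def}), bounding the three resulting terms by Cauchy--Schwarz with the mean--square estimates (\ref{W(a)-Delta-mean-square}), (\ref{V-3-mean-square}) and (\ref{W(a)-mean-square}) against the eighth moment of Lemma \ref{f_3-mean-zh}(iv). Your proof cannot close without this (or an equivalent) mechanism on $\mathfrak{m}_1$; your descriptions of $\mathfrak{M}_0$, $\mathfrak{m}_0$ and $\mathfrak{m}_2$ are otherwise consistent with the paper's argument.
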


\begin{proof}
   Let
\begin{equation*}
  K(\alpha)=h(\alpha)f_3^3(\alpha)f_3^{*2}(\alpha)e(-N\alpha).
\end{equation*}
By the Farey dissection (\ref{Farey-dissection}), we have
\begin{align}\label{junzhi-fenjie}
  & \quad \sum_{m\leqslant D^{2/3}}a(m)\sum_{n\leqslant D^{1/3}}b(n)J(N,mn)
              \nonumber \\
  = & \quad \int_{\mathfrak{I}_0}K(\alpha)\mathrm{d}\alpha=
  \bigg(\int_{\mathfrak{M}_0}+\int_{\mathfrak{m}_0}+\int_{\mathfrak{m}_1}+\int_{\mathfrak{m}_2}\bigg)K(\alpha)\mathrm{d}\alpha.
\end{align}
From Cauchy's inequality, Lemma 2.5 of Vaughan \cite{Vaughan-book} and (iii) of Lemma \ref{f_3-mean-zh}, we obtain
\begin{align}\label{f_3^3-f_3^*2}
             \int_0^1|f_3^3(\alpha)f_3^{*2}(\alpha)|\mathrm{d}\alpha
     \ll & \,\,  \bigg(\int_0^1|f_3(\alpha)|^4\mathrm{d}\alpha\bigg)^{\frac{1}{2}}
             \bigg(\int_0^1\big|f_3^2(\alpha)f_3^{*4}(\alpha)\big|\mathrm{d}\alpha\bigg)^{\frac{1}{2}}    \nonumber \\
     \ll & \,\,  (N^{\frac{2}{3}+\varepsilon})^{1/2}(N^{\frac{8}{9}+\varepsilon})^{1/2} \ll N^{\frac{7}{9}+\varepsilon}.
\end{align}
By Lemma \ref{h(a)-upper-m2} and (\ref{f_3^3-f_3^*2}), we get
\begin{align}\label{K(a)-upper-m2}
              \int_{\mathfrak{m}_2}K(\alpha)\mathrm{d}\alpha
  \ll & \,\, \sup_{\alpha\in\mathfrak{m}_2}|h(\alpha)|\int_0^1|f_3^3(\alpha)f_3^{*2}(\alpha)|\mathrm{d}\alpha   \nonumber \\
  \ll & \,\, N^{\frac{5}{18}-24\varepsilon}\cdot N^{\frac{7}{9}+\varepsilon}\ll N^{\frac{19}{18}-\varepsilon}.
\end{align}
 From Theorem 4.1 of Vaughan \cite{Vaughan-book}, for $\alpha\in\mathfrak{m}_1$, we have
\begin{equation} \label{h(a)=W(a)+error}
    h(\alpha)=W(\alpha)+O(DQ_1^{\frac{1}{2}+\varepsilon})=W(\alpha)+O(N^{\frac{19}{72}-24\varepsilon}),
\end{equation}
 where $W(\alpha)$ is defined by (\ref{W(alpha)-def}). Define
\begin{equation}\label{K_1-def}
  K_1(\alpha)=W(\alpha)f_3^3(\alpha)f_3^{*2}(\alpha)e(-N\alpha).
\end{equation}
Then, by (\ref{f_3^3-f_3^*2}) and (\ref{h(a)=W(a)+error}), we have
\begin{equation}\label{K(a)=K_1(a)+error}
   \int_{\mathfrak{m}_1}K(\alpha)\mathrm{d}\alpha=\int_{\mathfrak{m}_1}K_1(\alpha)\mathrm{d}\alpha+O(N^{\frac{25}{24}-23\varepsilon}).
\end{equation}
Let
\begin{equation*}
   \mathfrak{N}_0(q,a)=\bigg(\frac{a}{q}-\frac{1}{N^{7/10}},\frac{a}{q}+\frac{1}{N^{7/10}}\bigg],\qquad
   \mathfrak{N}_0=\bigcup_{1\leqslant q\leqslant Q_0}\bigcup_{\substack{a=-q\\(a,q)=1}}^{2q}\mathfrak{N}_0(q,a),
\end{equation*}
\begin{equation*}
   \mathfrak{N}_1(q,a)=\mathfrak{N}(q,a)\setminus\mathfrak{N}_0(q,a),\qquad\quad
    \mathfrak{N}_1=\bigcup_{1\leqslant q\leqslant Q_0}\bigcup_{\substack{a=-q\\(a,q)=1}}^{2q}\mathfrak{N}_1(q,a),
\end{equation*}
\begin{equation*}
    \mathfrak{N}=\bigcup_{1\leqslant q\leqslant Q_0}\bigcup_{\substack{a=-q\\(a,q)=1}}^{2q}\mathfrak{N}(q,a),
\end{equation*}
where $\mathfrak{N}(q,a)$ is defined by (\ref{N(q,a)-def}). Then we have $\mathfrak{m}_1\subset\mathfrak{I}_0\subset\mathfrak{N}$.
By the rational approximation theorem of Dirichlet, we get
\begin{align}\label{K_1-upper=1+2}
            \int_{\mathfrak{m}_1}K_1(\alpha)\mathrm{d}\alpha
  \ll &  \int_{\mathfrak{m}_1\cap\mathfrak{N}_0}|K_1(\alpha)|\mathrm{d}\alpha+\int_{\mathfrak{m}_1\cap\mathfrak{N}_1}|K_1(\alpha)|\mathrm{d}\alpha
             \nonumber   \\
  \ll &   \sum_{1\leqslant q\leqslant Q_0}\sum_{\substack{a=-q\\(a,q)=1}}^{2q}\int_{\mathfrak{m}_1\cap\mathfrak{N}_0(q,a)}|K_1(\alpha)|\mathrm{d}\alpha
            \nonumber \\
    &  +\sum_{1\leqslant q\leqslant Q_0}\sum_{\substack{a=-q\\(a,q)=1}}^{2q}\int_{\mathfrak{m}_1\cap\mathfrak{N}_1(q,a)}|K_1(\alpha)|\mathrm{d}\alpha .
\end{align}
By Lemma \ref{Titchmarsh-lemma-42}, we have
\begin{equation*}
    v_k(\beta)\ll \frac{U_k}{1+|\beta|N}.
\end{equation*}
From the trivial inequality $(q,d^2)\leqslant(q,d)^2$ and above estimate, we have
\begin{eqnarray} \label{W(a)-upper}
   |W(\alpha)| & \ll & \sum_{d\leqslant D}\frac{\tau(d)}{d}(q,d^2)^{1/2}q^{-1/2}|v_2(\beta)|
                         \nonumber \\
   & \ll & \tau_3(q)q^{-1/2}|v_2(\beta)|\log^2N \ll \frac{\tau_3(q)U_2\log^2N}{q^{1/2}(1+|\beta|N)}.
\end{eqnarray}
 Thus, for $\alpha\in\mathfrak{N}_1(q,a)$, we get
\begin{equation}
 W(\alpha)\ll N^{1/5}\log^2N,
\end{equation}
from which and (\ref{f_3^3-f_3^*2}) we have
\begin{eqnarray}\label{K_1-upper-2}
    &  & \sum_{1\leqslant q\leqslant Q_0}\sum_{\substack{a=-q\\(a,q)=1}}^{2q}\int_{\mathfrak{m}_1\cap\mathfrak{N}_1(q,a)}|K_1(\alpha)|\mathrm{d}\alpha
             \nonumber \\
    & \ll & N^{\frac{1}{5}}\log^2N\int_0^1\big|f_3^3(\alpha)f_3^{*2}(\alpha)\big|\mathrm{d}\alpha\ll N^{\frac{19}{18}-\varepsilon}.
\end{eqnarray}
 By Lemma \ref{Titchmarsh-lemma-48}, we derive
\begin{equation*}
   f_3(\alpha)=\Delta_3(\alpha)+V_3(\alpha)+O(1).
\end{equation*}
Therefore, we have
\begin{eqnarray}\label{m_1-cap-N0-fenjie}
    &  & \sum_{1\leqslant q\leqslant Q_0}\sum_{\substack{a=-q\\(a,q)=1}}^{2q}\int_{\mathfrak{m}_1\cap\mathfrak{N}_0(q,a)}|K_1(\alpha)|\mathrm{d}\alpha
             \nonumber \\
    & \ll & \sum_{1\leqslant q\leqslant Q_0}\sum_{\substack{a=-q\\(a,q)=1}}^{2q}\int_{\mathfrak{m}_1\cap\mathfrak{N}_0(q,a)}
            \big|W(\alpha)\Delta_3(\alpha)f_3^2(\alpha)f_3^{*2}(\alpha)\big|\mathrm{d}\alpha
                \nonumber \\
    &     & +\sum_{1\leqslant q\leqslant Q_0}\sum_{\substack{a=-q\\(a,q)=1}}^{2q}\int_{\mathfrak{m}_1\cap\mathfrak{N}_0(q,a)}
            \big|W(\alpha)V_3(\alpha)f_3^2(\alpha)f_3^{*2}(\alpha)\big|\mathrm{d}\alpha
                 \nonumber \\
    & & + O\Bigg(\sum_{1\leqslant q\leqslant Q_0}\sum_{\substack{a=-q\\(a,q)=1}}^{2q}\int_{\mathfrak{m}_1\cap\mathfrak{N}_0(q,a)}
                  \big|W(\alpha)f_3^2(\alpha)f_3^{*2}(\alpha)\big|\mathrm{d}\alpha   \Bigg)
                       \nonumber \\
    & =: & I_1+I_2+O(I_3),
\end{eqnarray}
where $\Delta_3(\alpha)$ and $V_3(\alpha)$ are defined by (\ref{Delta_3-def}) and (\ref{V_k-def}), respectively.

It follows from Cauchy's inequality, Lemma \ref{W(a)-Delta3} and (iv) of Lemma \ref{f_3-mean-zh} that
\begin{align}\label{I_1-upper}
   I_1  \ll & \,\, \Bigg(\sum_{1\leqslant q\leqslant Q_0}\sum_{\substack{a=-q\\(a,q)=1}}^{2q}\int_{\mathfrak{N}(q,a)}
                \big|W(\alpha)\Delta_3(\alpha)\big|^2\mathrm{d}\alpha\Bigg)^{1/2}
                  \bigg(\int_0^1\big|f_3(\alpha)f_3^*(\alpha)\big|^4\mathrm{d}\alpha\bigg)^{1/2}
                    \nonumber   \\
        \ll & \,\, \big(N^{\frac{2}{3}}\log^{-100A}N\big)^{1/2}\big(N^{\frac{13}{9}}\log^8N\big)^{1/2} \ll N^{\frac{19}{18}}\log^{-40A}N.
\end{align}
 From (\ref{W(a)-upper}), we know that, for $\alpha\in\mathfrak{m}_1$, there holds
\begin{equation}\label{W(a)-upper-m_1}
  \sup_{\alpha\in\mathfrak{m}_1} |W(\alpha)|\ll N^{\frac{1}{2}}\log^{-30A}N.
\end{equation}
Therefore, by Cauchy's inequality, (\ref{V-3-mean-square}), (\ref{W(a)-upper-m_1}) and (iv) of Lemma \ref{f_3-mean-zh}, we obtain
\begin{align}\label{I_2-upper}
   I_2 \ll & \, \sup_{\alpha\in\mathfrak{m}_1} |W(\alpha)| \cdot
              \Bigg(\sum_{1\leqslant q\leqslant Q_0}\sum_{\substack{a=-q\\(a,q)=1}}^{2q}\int_{\mathfrak{N}(q,a)}
                   \big|V_3(\alpha)\big|^2\mathrm{d}\alpha\Bigg)^{1/2}
              \bigg(\int_0^1\big|f_3(\alpha)f_3^*(\alpha)\big|^4\mathrm{d}\alpha\bigg)^{1/2}
                    \nonumber   \\
        \ll & \,  N^{\frac{1}{2}}\log^{-30A}N \cdot \big(N^{-\frac{1}{3}}\log^{21A}N\big)^{\frac{1}{2}} \cdot  \big(N^{\frac{13}{9}}\log^8N\big)^{\frac{1}{2}}
                \ll N^{\frac{19}{18}}\log^{-10A}N.
\end{align}
It follows from Cauchy's inequality, (\ref{W(a)-mean-square}), and (iv) of Lemma \ref{f_3-mean-zh}, we derive that
\begin{align}\label{I_3-upper}
   I_3  \ll &  \, \Bigg(\sum_{1\leqslant q\leqslant Q_0}\sum_{\substack{a=-q\\(a,q)=1}}^{2q}\int_{\mathfrak{N}(q,a)}
                  \big|W(\alpha)\big|^2\mathrm{d}\alpha\Bigg)^{1/2}
                  \bigg(\int_0^1\big|f_3(\alpha)f_3^*(\alpha)\big|^4\mathrm{d}\alpha\bigg)^{1/2}
                      \nonumber   \\
        \ll &   \,  (\log^{21A}N)^{\frac{1}{2}}(N^{\frac{13}{9}}\log^{8}N)^{\frac{1}{2}}\ll N^{\frac{13}{18}}\log^{11A}N \ll N^{\frac{19}{18}}\log^{-10A}N.
\end{align}
Combining (\ref{m_1-cap-N0-fenjie}), (\ref{I_1-upper}), (\ref{I_2-upper}) and (\ref{I_3-upper}), we can deduce that
\begin{equation}\label{K_1-upper-1}
\sum_{1\leqslant q\leqslant Q_0}\sum_{\substack{a=-q\\(a,q)=1}}^{2q}\int_{\mathfrak{m}_1\cap\mathfrak{N}_0(q,a)}|K_1(\alpha)|\mathrm{d}\alpha
\ll N^{\frac{19}{18}}\log^{-10A}N.
\end{equation}
From (\ref{K(a)=K_1(a)+error}), (\ref{K_1-upper=1+2}), (\ref{K_1-upper-2}) and (\ref{K_1-upper-1}) we conclude that
\begin{equation}\label{K(a)-upper-m1}
  \int_{\mathfrak{m}_1}K(\alpha)\mathrm{d}\alpha\ll N^{\frac{19}{18}}\log^{-10A}N.
\end{equation}
Similarly, we obtain
\begin{equation}\label{K(a)-upper-m0}
  \int_{\mathfrak{m}_0}K(\alpha)\mathrm{d}\alpha\ll N^{\frac{19}{18}}\log^{-10A}N.
\end{equation}

For $\alpha\in\mathfrak{M}_0$, define
\begin{equation*}
  K_0(\alpha)=W(\alpha)V_3^3(\alpha)W_3^2(\alpha)e(-N\alpha).
\end{equation*}
Noticing that (\ref{h(a)=W(a)+error}) still holds for $\alpha\in\mathfrak{M}_0$, it follows
from (\ref{f_3=V_3+O}), (\ref{f_3^*=W_3+O}) and (\ref{h(a)=W(a)+error}) that
\begin{equation*}
   K(\alpha)-K_0(\alpha)\ll N^{\frac{37}{18}}\exp\big(-\log^{1/4}N\big).
\end{equation*}
By the above estimate, we derive that
\begin{equation}\label{K(a)=K_0(a)+error}
   \int_{\mathfrak{M}_0}K(\alpha)\mathrm{d}\alpha = \int_{\mathfrak{M}_0}K_0(\alpha)\mathrm{d}\alpha +O\big( N^{\frac{19}{18}}\log^{-A}N\big).
\end{equation}
By the well--known standard technique  in the Hardy--Littlewood method, we deduce that
\begin{equation}\label{K_0=junzhi}
    \int_{\mathfrak{M}_0}K_0(\alpha)\mathrm{d}\alpha =
    \sum_{m\leqslant D^{2/3}}a(m)\sum_{n\leqslant D^{1/3}}b(n)\frac{\mathfrak{S}_{mn}(N)}{mn}\mathcal{J}(N)+O\big( N^{\frac{19}{18}}\log^{-A}N\big),
\end{equation}
and
\begin{equation}\label{singular-int}
    \mathcal{J}(N)\asymp N^{\frac{19}{18}}.
\end{equation}
From (\ref{junzhi-fenjie}), (\ref{K(a)-upper-m2}), (\ref{K(a)-upper-m1})--(\ref{singular-int}) , the
result of Proposition \ref{mean-value-theorem-1} follows.
\end{proof}

In a similar way, we have

\begin{proposition}\label{mean-value-theorem-2}
Let
\begin{equation*}
  J_r(N,d)=\sum_{\substack{(\ell p)^2+m^3+p_2^3+p_3^3+p_4^3+p_5^3=N \\ \ell p\in\mathcal{L}, \,\, \ell\in\mathscr{N}_r,  \,\,
   m\equiv0 \!\!\! \pmod d\\ U_3<p_2,\,p_3\leqslant2U_3 \\ U_3^*<p_4,\,p_5\leqslant 2U_3^*}}
  \left(\frac{\log p}{\log \frac{U_2}{\ell}}\prod_{j=2}^5\log p_j\right).
\end{equation*}
Then we have
\begin{equation*}
  \sum_{m\leqslant D^{2/3}}a(m)\sum_{n\leqslant D^{1/3}}b(n)\bigg(J_r(N,mn)-\frac{c_r\mathfrak{S}_{mn}(N)}{mn\log U_2}\mathcal{J}(N)\bigg)
    \ll N^{\frac{19}{18}}\log^{-A}N,
\end{equation*}
where $c_r$ is defined by (\ref{c_r-def}).
\end{proposition}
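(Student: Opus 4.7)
The plan is to follow the proof of Proposition~\ref{mean-value-theorem-1} step by step, with structural substitutions reflecting the switch from a sieve on the square root to a sieve on the cube variable. Setting
\[
\widetilde{K}_r(\alpha):=g_r(\alpha)\,\tilde{h}(\alpha)\,f_3^2(\alpha)\,f_3^{*2}(\alpha)\,e(-N\alpha),\qquad
\tilde{h}(\alpha):=\sum_{m\leqslant D^{2/3}}a(m)\sum_{n\leqslant D^{1/3}}b(n)\!\!\sum_{\substack{U_3<k\leqslant 2U_3\\ mn\mid k}}\!\!e(\alpha k^3),
\]
one gets $\sum_m a(m)\sum_n b(n)J_r(N,mn)=\int_{\mathfrak{I}_0}\widetilde{K}_r(\alpha)\,\mathrm{d}\alpha$. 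The Farey dissection (\ref{Farey-dissection}) splits this integral into contributions from $\mathfrak{M}_0$, $\mathfrak{m}_0$, $\mathfrak{m}_1$ and $\mathfrak{m}_2$, which I would estimate in direct analogy with the proof of Proposition~\ref{mean-value-theorem-1}.

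On the deep minor arcs $\mathfrak{m}_2$, a $k=3$ analogue of Lemma~\ref{h(a)-upper-m2} arising from the Br\"udern--Kawada bound for bilinear cubic exponential sums would give $\tilde h(\alpha)\ll N^{1/3-\eta}$ for some $\eta>0$. Combined with the trivial bound $|g_r(\alpha)|\ll N^{1/2}\log N$ and a Cauchy--Schwarz/Lemma~\ref{f_3-mean-zh} estimate for $\int_0^1|f_3^2(\alpha)f_3^{*2}(\alpha)|\mathrm{d}\alpha$, this yields the required $\ll N^{19/18-\varepsilon}$ saving. On $\mathfrak{m}_0\cup\mathfrak{m}_1$, I would mirror the passage (\ref{h(a)=W(a)+error})--(\ref{K_1-upper-1}) verbatim: first replace $\tilde h(\alpha)$ by its major-arc skeleton with error controlled by Theorem~4.1 of \cite{Vaughan-book}, then decompose $\mathfrak{m}_1=(\mathfrak{m}_1\cap\mathfrak{N}_0)\cup(\mathfrak{m}_1\cap\mathfrak{N}_1)$, apply Lemma~\ref{Titchmarsh-lemma-48} to write $f_3=\Delta_3+V_3+O(1)$, and control the three resulting integrals $I_1,I_2,I_3$ through Cauchy's inequality together with Lemmas~\ref{W(a)-Delta3}, \ref{f_3-mean-zh}(iv) and the bounds (\ref{V-3-mean-square})--(\ref{W(a)-mean-square}). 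The uniform bound $|g_r(\alpha)|\ll N^{1/2}$ absorbs the $g_r$ factor against the logarithmic savings.

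On the major arc $\mathfrak{M}_0$, Lemma~\ref{f_k-tihuan} supplies the replacements $f_3(\alpha)\to V_3(\alpha)$, $f_3^*(\alpha)\to W_3(\alpha)$ and crucially $g_r(\alpha)\to \frac{c_r V_2(\alpha)}{\log U_2}$, each with error of size $\exp(-\log^{1/3}N)$ times a polynomial in $N$ that is negligible after multiplication and integration over $\mathfrak{M}_0$. The standard Hardy--Littlewood evaluation then factors $\int_{\mathfrak{M}_0}\widetilde K_r(\alpha)\,\mathrm{d}\alpha$ into the singular integral $\mathcal{J}(N)\asymp N^{19/18}$ from (\ref{singular-int}) and the singular-series sum, which reassembles to $\sum_m a(m)\sum_n b(n)\mathfrak{S}_{mn}(N)/(mn)$ via the usual factorization of $B_d(q,N)$ together with Lemma~\ref{Hua-fuhe}, multiplied throughout by the factor $c_r/\log U_2$ carried from the $g_r$ approximation. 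The main obstacle is establishing the cube analogues of Lemmas~\ref{W(a)-Delta3}--\ref{h(a)-upper-m2} for $\tilde h(\alpha)$; these follow from the Br\"udern--Kawada inequality for general $k$, Cauchy--Schwarz with a type-I/type-II decomposition, and Hua's mean-value machinery applied with minor modifications to the $k=2$ arguments, after which the remainder of the proof transcribes mechanically from Proposition~\ref{mean-value-theorem-1}.
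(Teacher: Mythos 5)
The paper offers no proof of this proposition beyond the phrase ``In a similar way'', so there is no detailed argument to measure yours against; your overall architecture --- the generating function $g_r(\alpha)\tilde h(\alpha)f_3^2(\alpha)f_3^{*2}(\alpha)$, the Farey dissection (\ref{Farey-dissection}), Lemma~\ref{f_k-tihuan} to replace $g_r$ by $c_rV_2/\log U_2$ on $\mathfrak{M}_0$, and the $I_1,I_2,I_3$ decomposition on $\mathfrak{m}_0\cup\mathfrak{m}_1$ --- is surely what the authors intend. But the deduction is not the mechanical transcription you describe, and two of your steps fail concretely. The first is your bound on $\mathfrak{m}_2$: you majorize the contribution by $\sup_{\mathfrak{m}_2}|\tilde h|\cdot\sup|g_r|\cdot\int_0^1|f_3^2(\alpha)f_3^{*2}(\alpha)|\,\mathrm{d}\alpha$ with $\tilde h\ll N^{1/3-\eta}$ ``for some $\eta>0$''. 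Here $\sup|g_r|\asymp U_2\asymp N^{1/2}$ and the integral is at least its diagonal, $\gg U_3U_3^{*}\log^2N\gg N^{11/18}$, so your majorant is $\gg N^{10/9}\sup_{\mathfrak{m}_2}|\tilde h|=N^{26/18-\eta}$; to reach $N^{19/18-\varepsilon}$ you would need $\eta\geqslant 7/18$, i.e.\ $\tilde h(\alpha)\ll N^{-1/18}$, which is beyond even square-root cancellation ($\eta=1/6$) for a sum of $\asymp N^{1/3}$ terms and is certainly unattainable for $Q_1<q\leqslant Q_2$. You cannot take both $\tilde h$ and $g_r$ out in sup norm: $g_r$ must remain inside the integral, e.g.\ via $\int_0^1|g_rf_3^2f_3^{*2}|\leqslant\big(\int_0^1|g_r|^2|f_3^*|^2\big)^{1/2}\big(\int_0^1|f_3|^4|f_3^*|^2\big)^{1/2}$, where the first factor is $\ll N^{7/9+\varepsilon}$ by a divisor argument on $x^2-y^2=u^3-v^3$, and even then the pointwise saving demanded of $\tilde h$ on $\mathfrak{m}_2$ is at the edge of what cubic type--I/Weyl estimates provide for $q>Q_1=N^{4/9+50\varepsilon}$. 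This piece requires an explicit computation, not an appeal to analogy with the square case.

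The second gap is the singular series. The congruence condition $mn\mid m$ in $J_r(N,mn)$ now sits on a \emph{cube} variable, so the local sums produced on $\mathfrak{M}_0$ by $g_r\tilde hf_3^2f_3^{*2}$ are of the shape $S_2^*(q,a)S_3(q,ad^3)S_3^{*4}(q,a)$, whereas $B_d(q,N)$ as defined in Section~2 is built from $S_2(q,ad^2)S_3^{*5}(q,a)$. These are genuinely different arithmetic functions of $d$ and $q$ (for instance at primes $p\equiv2\pmod 3$ one has $S_3(p,a)=0$ while $S_3^*(p,a)=-1$), so the main term does not ``reassemble to $\sum_m a(m)\sum_n b(n)\mathfrak{S}_{mn}(N)/(mn)$ via the usual factorization of $B_d(q,N)$'' as you assert. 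You must either prove that the modified singular series agrees with $\mathfrak{S}_{mn}(N)$ to within an acceptable error, or rework the local densities (note that $\mathfrak{K}(p,N)$ in Lemma~\ref{congruence-lemma} encodes divisibility of the \emph{square} variable) so that the sieve input of Section~6 still applies. This issue is inherited from the paper's terse statement, but a self-contained proof has to resolve it rather than assert it.
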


\section{On the function $\omega(d)$}

In this section, we shall investigate the function $\omega(d)$ which is defined in (\ref{omega(d)-def}) and required in the proof of the Theorem \ref{Theorem}.

\begin{lemma}\label{congruence-lemma}
   Let $\mathfrak{K}(q,N)$ and $\mathfrak{L}(q,N)$ denote the number of solutions of the following congruences
\begin{equation*}
   u_1^3+u_2^3+u_3^3+u_4^3+u_5^3\equiv N (\bmod q), \quad 1\leqslant u_j\leqslant q,\quad (u_j,q)=1 ,
\end{equation*}
 and
\begin{equation*}
     x^2+u_1^3+u_2^3+u_3^3+u_4^3+u_5^3\equiv N (\bmod q),\quad 1\leqslant x,u_j\leqslant q, \quad (u_j,q)=1 ,
\end{equation*}
 respectively. Then we have $\mathfrak{L}(p,N)>\mathfrak{K}(p,N)$ and $\mathfrak{L}(9,N)>3\mathfrak{K}(9,N)$. Moreover, there holds
\begin{equation*}
    \mathfrak{L}(p,N)=p^5+O(p^4),
\end{equation*}
\begin{equation*}
    \mathfrak{K}(p,N)=p^4+O(p^3).
\end{equation*}
\end{lemma}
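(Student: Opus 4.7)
The plan is to treat the two asymptotic formulas and the two strict inequalities separately, reducing each to a short analytic argument followed by a careful but elementary count for small moduli.

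For the asymptotics, I would invoke orthogonality of additive characters modulo $p$ to write
\begin{equation*}
   \mathfrak{K}(p,N) = \frac{1}{p}\sum_{a=0}^{p-1} e\!\left(\frac{-aN}{p}\right)\bigl(S_3^*(p,a)\bigr)^5,
\end{equation*}
\begin{equation*}
   \mathfrak{L}(p,N) = \frac{1}{p}\sum_{a=0}^{p-1} e\!\left(\frac{-aN}{p}\right)S_2(p,a)\bigl(S_3^*(p,a)\bigr)^5.
\end{equation*}
Isolating $a=0$ contributes $(p-1)^5/p = p^4 + O(p^3)$ and $(p-1)^5 = p^5 + O(p^4)$, respectively. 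The remaining terms are controlled by the classical bounds $|S_2(p,a)| \leq p^{1/2}$ for $(a,p)=1$ together with the cubic Gauss-sum estimate $|S_3^*(p,a)| \ll p^{1/2}$ (trivial when $\gcd(3,p-1)=1$, Weil-type otherwise). This produces an error of size $O(p^{5/2})$ for $\mathfrak{K}(p,N)$ and $O(p^3)$ for $\mathfrak{L}(p,N)$, both absorbed into the claimed error terms.

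For the strict inequality $\mathfrak{L}(p,N) > \mathfrak{K}(p,N)$, I would partition the range of $x$ in the definition of $\mathfrak{L}(p,N)$ according to the value of $x^2 \bmod p$ to get
\begin{equation*}
   \mathfrak{L}(p,N) = \mathfrak{K}(p,N) + \sum_{y \neq 0} N_s(y)\,\mathfrak{K}(p,N-y),
\end{equation*}
where $N_s(y)$ counts $x \in \{1,\dots,p\}$ with $x^2 \equiv y \pmod p$. It then suffices to exhibit a single nonzero quadratic residue $y$ for which $\mathfrak{K}(p,N-y) > 0$. For $p$ large enough (say $p \geq 11$) this is automatic from the already-established asymptotic $\mathfrak{K}(p,\cdot) = p^4 + O(p^3)$, which forces positivity for every residue class. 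The remaining primes $p = 2,3,5,7$ are handled by direct inspection: for $p = 2,3,5$ the cube map is a bijection on $(\mathbb{Z}/p\mathbb{Z})^*$, so $\mathfrak{K}(p,M)>0$ for all $M$; for $p=7$ the cubes of units collapse to $\{\pm 1\}$ and $\mathfrak{K}(7,M) = 0$ precisely when $7\mid M$, but the three distinct residues $N-1,N-2,N-4$ modulo $7$ cannot all lie in $\{0\}$, so at least one supplies a positive contribution.

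For $\mathfrak{L}(9,N) > 3\mathfrak{K}(9,N)$, I would carry out the analogous decomposition modulo $9$. The key arithmetic inputs are that the cubes of units modulo $9$ equal $\{1,-1\}$ (each attained by three units) and that the squares of $\{1,\dots,9\}$ take the values $0,1,4,7$ with multiplicities $3,2,2,2$. Hence
\begin{equation*}
   \mathfrak{L}(9,N) = 3\mathfrak{K}(9,N) + 2\mathfrak{K}(9,N-1) + 2\mathfrak{K}(9,N-4) + 2\mathfrak{K}(9,N-7),
\end{equation*}
and the zero set of $\mathfrak{K}(9,\cdot)$ is exactly $\{0,2,7\} \pmod 9$. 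A nine-line case check on $N \bmod 9$ then shows that $\{N-1,N-4,N-7\}$ is never contained in $\{0,2,7\}$, so at least one of the three shifted terms is strictly positive.

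The main obstacle is precisely this small-modulus bookkeeping: the asymptotic $\mathfrak{K}(p,N) = p^4 + O(p^3)$ has an implicit constant that only becomes effective for $p$ moderately large, so positivity at $p = 7$ and the whole statement at $q = 9$ rest on an honest case analysis exploiting the fact that the cube map modulo $7$ and modulo $9$ collapses onto $\{\pm 1\}$, together with the arithmetic of the square-residue multiplicities. The analytic input reduces the claim to this finite check, but the finite check itself is where all the content lies.
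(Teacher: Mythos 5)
The paper itself offers no proof of this lemma beyond a citation to Lemma 4.1 of Cai, so your argument cannot be compared line by line with the source; but your strategy (orthogonality plus Weil/Gauss-sum bounds for the two asymptotics, then an explicit decomposition over the values of $x^2$ for the strict inequalities, with a finite case check at the degenerate moduli) is the standard one and is surely in the spirit of Cai's argument. Your computations in the genuinely delicate cases are correct: the cubes of units modulo $7$ and modulo $9$ do collapse to $\{\pm1\}$, the zero set of $\mathfrak{K}(9,\cdot)$ is exactly $\{0,2,7\}\pmod 9$, the square multiplicities modulo $9$ are $(3,2,2,2)$ on $(0,1,4,7)$, and the three translates $\{0,2,7\}+1$, $\{0,2,7\}+4$, $\{0,2,7\}+7$ have empty intersection, which settles $\mathfrak{L}(9,N)>3\mathfrak{K}(9,N)$ for every $N$.

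Two steps need repair. First, the claim that positivity of $\mathfrak{K}(p,M)$ for $p\geqslant 11$ is ``automatic from the already-established asymptotic'' is not legitimate as written: an asymptotic with an inexplicit $O$-constant says nothing about any individual prime, and indeed the crude bound $\mathfrak{K}(p,M)\geqslant (p-1)^5/p-(2\sqrt p+1)^5$ is still negative at $p=13$. You acknowledge this tension in your closing paragraph but do not resolve it. A clean patch: for $p\not\equiv 1\pmod 3$ the cube map is a bijection on units, so $\mathfrak{K}(p,M)=\frac{1}{p}\big((p-1)^5+O(p)\big)>0$ directly; for $p\equiv1\pmod3$ the nonzero cubes form a set $C$ with $|C|=(p-1)/3$, and Cauchy--Davenport gives $|5C|\geqslant\min\big(p,\,5(p-1)/3-4\big)=p$ once $p\geqslant11$, so every residue class is a sum of five nonzero cubes and only $p=7$ survives as an exception, which you then handle correctly. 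Second, the assertion that ``for $p=2,3,5$ the cube map is a bijection on $(\mathbb{Z}/p\mathbb{Z})^*$, so $\mathfrak{K}(p,M)>0$ for all $M$'' is false at $p=2$: the only unit is $1$, so $\mathfrak{K}(2,M)=1$ if $M$ is odd and $0$ if $M$ is even. The inequality $\mathfrak{L}(2,N)>\mathfrak{K}(2,N)$ nevertheless holds, but only because $N$ is even (then $\mathfrak{K}(2,N)=0$ while $\mathfrak{L}(2,N)=1$); for odd $N$ one would have $\mathfrak{L}(2,N)=\mathfrak{K}(2,N)=1$ and the strict inequality would fail. So the parity hypothesis on $N$, which you never invoke, is actually essential at $p=2$ and must be cited there.
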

\begin{proof}
   See Lemma 4.1 of Cai \cite{Cai}.
\end{proof}

\begin{lemma}\label{S(N)-convergence}
   The series $\mathfrak{S}(N)$ is convergent and satisfying $\mathfrak{S}(N)>0$.
\end{lemma}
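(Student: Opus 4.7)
The plan is to factor $\mathfrak{S}(N)$ as an Euler product, establish absolute convergence via Hua's vanishing lemma together with standard bounds on the Gauss sums $S_2(p,a)$ and $S_3^*(p,a)$, and then obtain positivity of each local factor by interpreting it as a density of solutions of the congruences in Lemma \ref{congruence-lemma} and lifting via Hensel. First I would check that $A(q,N) = B(q,N)/(q\varphi^5(q))$ is multiplicative in $q$, a routine consequence of twisted multiplicativity of $S_2(q,a)$ and $S_3^*(q,a)$ under the Chinese Remainder Theorem, yielding the formal factorisation
\begin{equation*}
  \mathfrak{S}(N) = \prod_{p} \chi_p(N), \qquad \chi_p(N) := \sum_{\ell=0}^{\infty} A(p^\ell, N).
\end{equation*}
By Lemma \ref{Hua-fuhe} each local sum truncates at $\ell = \gamma(p)-1$, so every $\chi_p(N)$ is actually a finite sum. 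For convergence of the product I would use $|S_2(p,a)| \leqslant p^{1/2}$ together with the Weil-type bound $|S_3^*(p,a)| \ll p^{1/2}$ for $(a,p)=1$ to obtain $|A(p,N)| \ll p^{-2}$; this delivers both the absolute convergence of $\prod_p \chi_p(N)$ and the positivity $\chi_p(N) = 1 + O(p^{-2}) > 0$ for every sufficiently large $p$.

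For the remaining finitely many small primes I would pass to the density interpretation
\begin{equation*}
   \sum_{\ell=0}^{L} A(p^\ell, N) = \frac{\mathfrak{L}(p^L, N)}{\varphi(p^L)^5},
\end{equation*}
which is obtained by detecting the congruence $x^2 + u_1^3 + \cdots + u_5^3 \equiv N \pmod{p^L}$ by additive characters and regrouping the resulting exponential sum according to the exact $p$-power dividing the denominator of $a$. Thus $\chi_p(N)$ equals the limit of the normalised count on the right, and it is enough to exhibit, for each small $p$, a Hensel-liftable solution at a suitable level. When $p \neq 2, 3$ the cube derivatives $3u_j^2$ are units mod $p$, so every mod-$p$ solution counted in $\mathfrak{L}(p,N)$ lifts to $\varphi(p^L)^5/\varphi(p)^5$ solutions mod $p^L$, and the strict inequality $\mathfrak{L}(p,N) > \mathfrak{K}(p,N) \geqslant 0$ from Lemma \ref{congruence-lemma} is more than enough. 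The case $p=2$ is similar, using $3u_j^2 \equiv 1 \pmod 2$ for odd $u_j$ and $\mathfrak{L}(2,N) \geqslant 1$ when $N$ is even.

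The delicate case, and the main obstacle, is $p=3$: there $3u_j^2 \equiv 0 \pmod 3$ so the cubes are singular, and Hensel must be applied to $x$ rather than any $u_j$. This requires a mod-$9$ solution with $x \not\equiv 0 \pmod 3$, and the strict inequality $\mathfrak{L}(9,N) > 3\mathfrak{K}(9,N)$ in Lemma \ref{congruence-lemma} is engineered precisely to furnish one: the three residues $x \in \{0,3,6\}$ have $x^2 \equiv 0 \pmod 9$ and contribute exactly $3\mathfrak{K}(9,N)$ to $\mathfrak{L}(9,N)$, so the excess counts mod-$9$ solutions with $2x$ a unit mod $3$. Each such solution lifts by Hensel to $3^{5(L-2)}$ solutions mod $3^L$, yielding $\chi_3(N) \geqslant (\mathfrak{L}(9,N) - 3\mathfrak{K}(9,N))/(2^5 \cdot 3^5) > 0$. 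Combined with the positivity at every other prime and the absolutely convergent Euler product, this gives $\mathfrak{S}(N) > 0$.
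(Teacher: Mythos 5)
Your argument is correct and is essentially the standard one: the paper offers no proof of this lemma beyond citing Lemma 4.2 of Cai, and your Euler--product factorisation, the truncation of the local factors via Lemma \ref{Hua-fuhe}, the Weil/Gauss--sum bound giving $|A(p,N)|\ll p^{-2}$, and the identification of the local factors with normalised counts of the congruences in Lemma \ref{congruence-lemma} are exactly the ingredients the paper itself manipulates in Section 5 when computing $\omega(p)$. The only superfluous step is the Hensel lifting: since Lemma \ref{Hua-fuhe} already truncates the local factor at level $p$ for $p\neq 3$ (resp.\ level $9$ for $p=3$), your identity yields $\chi_p(N)=\mathfrak{L}(p,N)/\varphi(p)^5$ and $\chi_3(N)=\mathfrak{L}(9,N)/\varphi(9)^5$ exactly, so positivity follows at once from $\mathfrak{L}(p,N)>\mathfrak{K}(p,N)\geqslant 0$ and $\mathfrak{L}(9,N)>3\mathfrak{K}(9,N)\geqslant 0$ without any lifting argument.
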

\begin{proof}
 See Lemma 4.2 of Cai \cite{Cai}.
\end{proof}

\noindent
In view of Lemma \ref{S(N)-convergence}, we define
\begin{equation}\label{omega(d)-def}
   \omega(d)=\frac{\mathfrak{S}_d(N)}{\mathfrak{S}(N)}.
\end{equation}
Noting the fact that $A_d(q,N)$ is multiplicative in $q$, and by Lemma \ref{Hua-fuhe}, we can see that
\begin{equation}\label{singular(S)_d-explicit}
  \mathfrak{S}_d(N)=\big(1+A_d(3,N)+A_d(9,N)\big)\prod_{\substack{p\nmid d\\p\not=3}}\big(1+A_d(p,N)\big)
  \prod_{\substack{p| d\\p\not=3}}\big(1+A_d(p,N)\big).
\end{equation}
Especially, we have
\begin{equation}\label{singular(S)_1-explicit}
  \mathfrak{S}(N)=\big(1+A(3,N)+A(9,N)\big)\prod_{p\not=3}\big(1+A(p,N)\big).
\end{equation}
If $(d,q)=1$, then we have $S_k(q,ad^k)=S_k(q,a)$. Moreover, if $p|d$, then we get $A_d(p,N)=A_p(p,N)$. Therefore, it follows
from (\ref{omega(d)-def})--(\ref{singular(S)_1-explicit}) that
\begin{equation}\label{omega(p)-fd}
   \omega(p)=\left\{
    \begin{array}{ll}
      \displaystyle\frac{1+A_p(p,N)}{1+A(p,N)},                    & \textrm{if $p\not=3$},  \\
      \displaystyle\frac{1+A_3(3,N)+A_3(9,N)}{1+A(3,N)+A(9,N)},    & \textrm{if $p=3$},
    \end{array}
   \right.
   \qquad \omega(d)=\prod_{p|d}\omega(p).
\end{equation}
Also, it is easy to show that, for $p\not=3$, there holds
\begin{equation}\label{omega(p)-yz-1}
  1+A_p(p,N)=\frac{\mathfrak{K}(p,N)}{(p-1)^5},\qquad 1+A(p,N)=\frac{\mathfrak{L}(p,N)}{p(p-1)^5},
\end{equation}
and
\begin{equation}\label{omega(p)-yz-2}
  1+A_3(3,N)+A_3(9,N)=\frac{\mathfrak{K}(9,N)}{6^5},\qquad 1+A(3,N)+A(9,N)=\frac{\mathfrak{L}(9,N)}{3^26^5}.
\end{equation}
From (\ref{omega(p)-yz-1}) and (\ref{omega(p)-yz-2}), we deduce that
\begin{equation}\label{omega(p)-solution}
  \omega(p)=\left\{
  \begin{array}{ll}
      \displaystyle\frac{p\mathfrak{K}(p,N)}{\mathfrak{L}(p,N)},   & \textrm{if $p\not=3$},  \\
      \displaystyle\frac{9\mathfrak{K}(9,N)}{\mathfrak{L}(9,N)},    & \textrm{if $p=3$}.
    \end{array}
  \right.
\end{equation}
According to Lemma \ref{congruence-lemma}, (\ref{omega(p)-fd}) and (\ref{omega(p)-solution}), we obtain the following lemma.

\begin{lemma}\label{omega(p)-property}
  The function $\omega(d)$ is multiplicative and satisfies
\begin{equation}\label{Omega-condition}
   0\leqslant\omega(p)<p,\qquad \omega(p)=1+O(p^{-1}).
\end{equation}
\end{lemma}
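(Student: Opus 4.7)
The plan is to derive all three assertions directly from the explicit formulas (\ref{omega(p)-fd}) and (\ref{omega(p)-solution}), together with the information on $\mathfrak{K}(p,N)$ and $\mathfrak{L}(p,N)$ provided by Lemma \ref{congruence-lemma}. Multiplicativity is essentially built into the definition: the identity $\omega(d)=\prod_{p\mid d}\omega(p)$ in (\ref{omega(p)-fd}) is manifestly multiplicative, so it only remains to treat $\omega(p)$ prime by prime and check the pointwise bounds.

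For the inequality $0\leqslant \omega(p)<p$, I would split into the cases $p\neq 3$ and $p=3$. When $p\neq 3$, the formula $\omega(p)=p\mathfrak{K}(p,N)/\mathfrak{L}(p,N)$ together with $\mathfrak{L}(p,N)>\mathfrak{K}(p,N)\geqslant 0$ from Lemma \ref{congruence-lemma} gives both non-negativity and $\omega(p)<p$ at once. When $p=3$, the formula $\omega(3)=9\mathfrak{K}(9,N)/\mathfrak{L}(9,N)$ combined with the strict inequality $\mathfrak{L}(9,N)>3\mathfrak{K}(9,N)$ (again from Lemma \ref{congruence-lemma}) yields precisely $\omega(3)<3$, and non-negativity is obvious since both congruence counts are non-negative.

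For the asymptotic $\omega(p)=1+O(p^{-1})$, which is only interesting for large $p$ and therefore only for $p\neq 3$, it suffices to substitute the formulas $\mathfrak{K}(p,N)=p^{4}+O(p^{3})$ and $\mathfrak{L}(p,N)=p^{5}+O(p^{4})$ from Lemma \ref{congruence-lemma} into (\ref{omega(p)-solution}) and simplify: $\omega(p)=(p^{5}+O(p^{4}))/(p^{5}+O(p^{4}))=1+O(p^{-1})$. The single exceptional value $\omega(3)$ is a finite constant and so fits trivially into the estimate with an adjustment of the implied constant. Since every step reduces to a direct substitution or an appeal to Lemma \ref{congruence-lemma}, there is no real obstacle; the only subtlety is keeping the exceptional prime $p=3$ separate, which is already encoded in the piecewise definition (\ref{omega(p)-solution}).
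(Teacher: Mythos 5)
Your proposal is correct and follows exactly the route the paper intends: the paper derives the lemma directly from the explicit formulas (\ref{omega(p)-fd}) and (\ref{omega(p)-solution}) together with the inequalities $\mathfrak{L}(p,N)>\mathfrak{K}(p,N)$, $\mathfrak{L}(9,N)>3\mathfrak{K}(9,N)$ and the asymptotics for $\mathfrak{K}(p,N)$ and $\mathfrak{L}(p,N)$ from Lemma \ref{congruence-lemma}. You have simply written out the case analysis that the paper leaves implicit, so there is nothing to add.
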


\section{Proof of Theorem \ref{Theorem}}

In this section, let $f(s)$ and $F(s)$ denote the classical functions in the linear sieve theory. Then by (2.8) and (2.9) of Chapter 8 in \cite{Halberstam-Richert}, we have
\begin{equation*}
  F(s)=\frac{2e^\gamma}{s},\quad 1\leqslant s\leqslant3;\qquad f(s)=\frac{2e^\gamma\log(s-1)}{s},\quad 2\leqslant s\leqslant4.
\end{equation*}
In the proof of Theorem \ref{Theorem}, let $\lambda^{\pm}(d)$ be the lower and upper bounds for Rosser's weights of level $D$, hence
for any positive integer $d$ we have
\begin{equation*}
  |\lambda^{\pm}(d)|\leqslant1,\quad \lambda^{\pm}(d)=0 \quad \textrm{if} \quad d>D \quad \textrm{or}\quad \mu(d)=0.
\end{equation*}
For further properties of Rosser's weights we refer to Iwaniec \cite{Iwaniec-1}.
Let
\begin{equation*}
  \mathscr{V}(z)=\prod_{2<p<z}\bigg(1-\frac{\omega(p)}{p}\bigg).
\end{equation*}
Then from Lemma \ref{omega(p)-property} and Mertens' prime number theorem (See \cite{Mertens}) we obtain
\begin{equation}
    \mathscr{V}(z)\asymp \frac{1}{\log N}.
\end{equation}

In order to prove Theorem \ref{Theorem}, we need the following lemma:
\begin{lemma}
  Under the condition (\ref{Omega-condition}), then if $z\leqslant D$, there holds
\begin{equation}
   \sum_{d|\mathfrak{P}}\frac{\lambda^-(d)\omega(d)}{d}\geqslant\mathscr{V}(z)\bigg(f\bigg(\frac{\log D}{\log z}\bigg)+O\big(\log^{-1/3}D\big)\bigg),
\end{equation}
and if $z\leqslant D^{1/2}$, there holds
\begin{equation}
   \sum_{d|\mathfrak{P}}\frac{\lambda^+(d)\omega(d)}{d}\leqslant\mathscr{V}(z)\bigg(F\bigg(\frac{\log D}{\log z}\bigg)+O\big(\log^{-1/3}D\big)\bigg).
\end{equation}
\end{lemma}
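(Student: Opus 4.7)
The plan is to prove this as a direct application of Iwaniec's linear sieve theorem, once the dimension-one hypothesis is verified for $\omega$. The two inequalities are the standard lower and upper bound estimates for Rosser's weights in the linear sieve, so no further ``problem-specific'' work is needed beyond checking that the present $\omega$ fits into that framework.

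First, I would verify the $\Omega_1$ (or ``linear'') condition of sieve theory, namely that there is an absolute constant $K$ such that for all $2<w<z$,
\begin{equation*}
  \prod_{w\leqslant p<z}\left(1-\frac{\omega(p)}{p}\right)^{-1}\leqslant \frac{\log z}{\log w}\left(1+\frac{K}{\log w}\right).
\end{equation*}
This follows from (\ref{Omega-condition}) by taking logarithms: expanding $-\log(1-\omega(p)/p)=\omega(p)/p+O(p^{-2})$ and using $\omega(p)=1+O(p^{-1})$ yields
\begin{equation*}
  \sum_{w\leqslant p<z}\left(-\log\left(1-\frac{\omega(p)}{p}\right)\right)=\sum_{w\leqslant p<z}\frac{1}{p}+O\left(\frac{1}{\log w}\right)=\log\frac{\log z}{\log w}+O\left(\frac{1}{\log w}\right),
\end{equation*}
by Mertens' prime number theorem; exponentiating gives the claim. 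Combined with (\ref{Omega-condition}) itself this is exactly the hypothesis under which Rosser's weights of level $D$ are constructed.

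Second, having checked the dimension-one hypothesis, I would appeal directly to Iwaniec's construction of Rosser's weights \cite{Iwaniec-1}, which produces sequences $\lambda^{\pm}(d)$ supported on squarefree integers $d\leqslant D$ with $|\lambda^{\pm}(d)|\leqslant 1$ and satisfying, for every multiplicative $\omega$ obeying the above $\Omega_1$ condition,
\begin{equation*}
  \sum_{d\mid \mathfrak{P}}\frac{\lambda^-(d)\omega(d)}{d}\geqslant \mathscr{V}(z)\Bigl(f(s)+O\bigl(\log^{-1/3}D\bigr)\Bigr)\quad (s=\log D/\log z,\ z\leqslant D),
\end{equation*}
\begin{equation*}
  \sum_{d\mid \mathfrak{P}}\frac{\lambda^+(d)\omega(d)}{d}\leqslant \mathscr{V}(z)\Bigl(F(s)+O\bigl(\log^{-1/3}D\bigr)\Bigr)\quad (z\leqslant D^{1/2}).
\end{equation*}
The restrictions $z\leqslant D$ and $z\leqslant D^{1/2}$ correspond, respectively, to the ranges in which $f$ and $F$ are given by the elementary closed-form expressions recorded at the start of the section.

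The only non-mechanical step is the verification of the $\Omega_1$ condition, and that is immediate from (\ref{Omega-condition}); everything else is a direct quotation of Iwaniec's theorem. Consequently there is no genuine obstacle: the main content of the lemma resides in Lemma \ref{omega(p)-property}, which packages $\omega$ into a form to which Iwaniec's machinery applies verbatim.
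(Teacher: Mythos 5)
Your proposal is correct and matches the paper's approach: the paper proves this lemma simply by citing (12) and (13) of Lemma 3 in Iwaniec's ``A new form of the error term in the linear sieve,'' exactly the direct appeal to Iwaniec's linear sieve that you make. Your added verification that $\omega$ satisfies the one-dimensional sieve condition via Mertens' theorem is a reasonable fleshing-out of the hypothesis check that the paper leaves implicit.
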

\begin{proof}
   See (12) and (13) of Lemma 3 in Iwaniec \cite{Iwaniec-2}.
\end{proof}

From the definition of $\mathscr{M}_r$, we know that $r\leqslant36$. Therefore, we have
\begin{eqnarray}\label{R_5(N)-lower-bound}
                    \mathcal{R}(N)
     & \geqslant &  \sum_{\substack{m^2+p_1^3+p_2^3+p_3^3+p_4^3+p_5^3=N\\ m\in\mathcal{L},\,\,
                   (m,\mathfrak{P})=1\\ U_3<p_1,\,p_2,\,p_3\leqslant2U_3 \\ U_3^*<p_4,\,p_5\leqslant 2U_3^*}}1-
                    \sum_{r=7}^{36} \sum_{\substack{m^2+p_1^3+p_2^3+p_3^3+p_4^3+p_5^3=N\\ m\in\mathscr{M}_r,\,\,
                    U_3^*<p_4,\,p_5\leqslant 2U_3^* \\ U_3<p_1,\,p_2,\,p_3\leqslant2U_3 }}1
                           \nonumber \\
     & =: & \Gamma_0-\sum_{r=7}^{36}\Gamma_{r}.
\end{eqnarray}
By the property of Rosser's weight $\lambda^-(d)$ and Proposition \ref{mean-value-theorem-1}, we get
\begin{eqnarray}\label{Gamma_0-lower-bound}
  \Gamma_0  & \geqslant & \frac{1}{\log\mathbf{U}}\sum_{\substack{m^2+p_1^3+p_2^3+p_3^3+p_4^3+p_5^3=N\\
                          m\in\mathcal{L},\,\,\, (m,\mathfrak{P})=1 \\ U_3<p_1,\,p_2,\,p_3\leqslant2U_3 \\ U_3^*<p_4,\,p_5\leqslant 2U_3^* }}
                          \prod_{j=1}^5\log p_j
                               \nonumber  \\
  & = & \frac{1}{\log\mathbf{U}}\sum_{\substack{m^2+p_1^3+p_2^3+p_3^3+p_4^3+p_5^3=N\\
           m\in\mathcal{L},\,\,\,  U_3^*<p_4,\,p_5\leqslant 2U_3^*\\ U_3<p_1,\,p_2,\,p_3\leqslant2U_3}}
            \bigg(\prod_{j=1}^5\log p_j\bigg) \sum_{d|(m,\mathfrak{P})}\mu(d)
                                 \nonumber  \\
  & \geqslant & \frac{1}{\log\mathbf{U}} \sum_{\substack{m^2+p_1^3+p_2^3+p_3^3+p_4^3+p_5^3=N\\
           m\in\mathcal{L},\,\,\, U_3^*<p_4,\,p_5\leqslant 2U_3^*\\ U_3<p_1,\,p_2,\,p_3\leqslant2U_3}}
             \bigg(\prod_{j=1}^5\log p_j\bigg) \sum_{d|(m,\mathfrak{P})}\lambda^-(d)
                                  \nonumber  \\
  & = & \frac{1}{\log\mathbf{U}} \sum_{d|\mathfrak{P}}\lambda^-(d)J(N,d)
                                  \nonumber  \\
  & = & \frac{1}{\log\mathbf{U}} \sum_{d|\mathfrak{P}}\frac{\lambda^-(d)\mathfrak{S}_d(N)}{d} \mathcal{J}(N)+ O\big(N^{\frac{19}{18}}\log^{-A}N\big)
                                  \nonumber  \\
  & = & \frac{1}{\log\mathbf{U}} \bigg(\sum_{d|\mathfrak{P}}\frac{\lambda^-(d)\omega(d)}{d}\bigg)\mathfrak{S}(N)\mathcal{J}(N)
             + O\big(N^{\frac{19}{18}}\log^{-A}N\big)
                                  \nonumber  \\
  & \geqslant & \frac{\mathfrak{S}(N)\mathcal{J}(N)\mathscr{V}(z)}{\log\mathbf{U}} f(3)\Big(1+O\big(\log^{-1/3}D\big)\Big)
                   + O\big(N^{\frac{19}{18}}\log^{-A}N\big) .
\end{eqnarray}
By the property of Rosser's weight $\lambda^+(d)$ and Proposition \ref{mean-value-theorem-2}, we have
\begin{eqnarray}\label{Gamma_r-upper-bound}
   \Gamma_r  & \leqslant & \sum_{\substack{(\ell p)^2+m^3+p_2^3+p_3^3+p_4^3+p_5^3=N\\ \ell\in\mathscr{N}_r,\,\,\,
               \ell p\in\mathcal{L},\,\,(m,\mathfrak{P})=1 \\ U_3<p_2,\,p_3\leqslant2U_3 \\ U_3^*<p_4,\,p_5\leqslant 2U_3^*}}1
                        \nonumber  \\
   & \leqslant & \frac{1}{\log\mathbf{W}}\sum_{\substack{(\ell p)^2+m^3+p_2^3+p_3^3+p_4^3+p_5^3=N\\ \ell\in\mathscr{N}_r,\,\,\,
                  \ell p\in\mathcal{L},\,\,(m,\mathfrak{P})=1 \\ U_3<p_2,\,p_3\leqslant2U_3 \\ U_3^*<p_4,\,p_5\leqslant 2U_3^*}}
                  \frac{\log p}{\log\frac{U_2}{\ell}} \prod_{j=2}^5\log p_j
                        \nonumber  \\
   & = & \frac{1}{\log\mathbf{W}} \sum_{\substack{(\ell p)^2+m^3+p_2^3+p_3^3+p_4^3+p_5^3=N\\ \ell\in\mathscr{N}_r,\,\,\,
                  \ell p\in\mathcal{L} \\ U_3<p_2,\,p_3\leqslant2U_3 \\ U_3^*<p_4,\,p_5\leqslant 2U_3^*}}
                  \Bigg(\frac{\log p}{\log\frac{U_2}{\ell}} \prod_{j=2}^5\log p_j\Bigg)\sum_{d|(m,\mathfrak{P})}\mu(d)
                          \nonumber  \\
   & \leqslant & \frac{1}{\log\mathbf{W}} \sum_{\substack{(\ell p)^2+m^3+p_2^3+p_3^3+p_4^3+p_5^3=N\\ \ell\in\mathscr{N}_r,\,\,\,
                  \ell p\in\mathcal{L}, \\ U_3<p_2,\,p_3\leqslant2U_3 \\ U_3^*<p_4,\,p_5\leqslant 2U_3^*}}
                  \Bigg(\frac{\log p}{\log\frac{U_2}{\ell}} \prod_{j=2}^5\log p_j\Bigg)\sum_{d|(m,\mathfrak{P})}\lambda^+(d)
                           \nonumber  \\
   & = & \frac{1}{\log\mathbf{W}} \sum_{d|\mathfrak{P}} \lambda^+(d)J_r(N,d)
                            \nonumber  \\
   & = & \frac{1}{\log\mathbf{W}} \sum_{d|\mathfrak{P}} \frac{\lambda^+(d)c_r\mathfrak{S}_d(N)}{d\log U_2}\mathcal{J}(N)
            +O\big(N^{\frac{19}{18}}\log^{-A}N\big)
                            \nonumber  \\
   & = & \frac{c_r\mathfrak{S}(N)\mathcal{J}(N)}{(\log U_2)\log\mathbf{W}}
         \sum_{d|\mathfrak{P}}\frac{\lambda^+(d)\omega(d)}{d}+O\big(N^{\frac{19}{18}}\log^{-A}N\big)
                            \nonumber  \\
   & \leqslant & \frac{c_r\mathfrak{S}(N)\mathcal{J}(N)\mathscr{V}(z)}{\log\mathbf{U}} F(3)\Big(1+O\big(\log^{-1/3}D\big)\Big)
                   + O\big(N^{\frac{19}{18}}\log^{-A}N\big).
\end{eqnarray}
According to simple numerical calculation, we obtain
\begin{equation}\label{numerical-calculation}
  c_7\leqslant0.448639;\quad c_8\leqslant 0.113524;\quad c_9\leqslant 0.022574;\quad c_j\leqslant0.003579 \,\,\,\textrm{for}\,\,\, 10\leqslant j\leqslant36.
\end{equation}
From (\ref{R_5(N)-lower-bound})--(\ref{numerical-calculation}), we deduce that
\begin{align*}
   \mathcal{R}(N) & \geqslant \bigg(f(3)-F(3)\sum_{r=7}^{36}c_r\bigg)\bigg(1+O\Big(\frac{1}{\log^{1/3} D}\Big)\bigg)
                                  \frac{\mathfrak{S}(N)\mathcal{J}(N)\mathscr{V}(z)}{\log\mathbf{U}} + O\bigg(\frac{N^{\frac{19}{18}}}{\log^{A}N}\bigg)  \\
   & \geqslant \frac{2e^\gamma}{3}\big(\log2-0.448639-0.113524-0.022574-0.003579\times27\big)   \\
   &\qquad     \times\bigg(1+O\Big(\frac{1}{\log^{1/3} D}\Big)\bigg)\frac{\mathfrak{S}(N)\mathcal{J}(N)\mathscr{V}(z)}{\log\textbf{U}}
              + O\bigg(\frac{N^{\frac{19}{18}}}{\log^{A}N}\bigg)
                                    \nonumber  \\
   & \gg N^{\frac{19}{18}}\log^{-6}N,
\end{align*}
which completes the proof of Theorem \ref{Theorem}.

\section*{Acknowledgement}

   The authors would like to express the most sincere gratitude to Professor Wenguang Zhai for his valuable advice and constant encouragement.

\end{document}